\documentclass[10pt, reqno]{amsart}
\usepackage{amsmath,amssymb,amsthm,graphicx}
\usepackage[left=2.5cm,right=2.2cm,top=2.5cm,bottom=3.5cm]{geometry}

\usepackage[breaklinks,hypertexnames=false]{hyperref}
\hypersetup{
	colorlinks = true, 
	urlcolor = blue, 
	linkcolor = blue, 
	citecolor = blue 
}
\usepackage{orcidlink}
\usepackage{dsfont}
\usepackage{parskip}
\makeatletter
\def\thm@space@setup{%
  \thm@preskip=\parskip \thm@postskip=0pt
}
\makeatother

\usepackage[hang,flushmargin]{footmisc}
\usepackage{footnote} 
\makesavenoteenv{tabular}

\usepackage{caption}
\captionsetup{font=footnotesize}

\let\le\leqslant
\let\ge\geqslant

\let\eps\varepsilon

\let\mc\mathcal

\newcommand{\ZZ}{\mathbb{Z}}
\newcommand{\RR}{\mathbb{R}}
\newcommand{\NN}{\mathbb{N}}

\usepackage{thmtools}
\usepackage[noabbrev,capitalize,nameinlink,nosort]{cleveref}

\newtheorem{lemma}{Lemma}[section]
\newtheorem{proposition}[lemma]{Proposition}
\newtheorem{theorem}[lemma]{Theorem}

\crefname{fact}{Fact}{Facts}

\theoremstyle{definition}
\newtheorem{definition}[lemma]{Definition}

\crefname{claim}{Claim}{Claims}
\newtheorem*{remark*}{Remark}

\DeclareMathOperator{\Ber}{Ber}

\newcommand{\prob}[1]{\mathbb{P}\left[#1\right]}

\let\originalleft\left
\let\originalright\right
\renewcommand{\left}{\mathopen{}\mathclose\bgroup\originalleft}
\renewcommand{\right}{\aftergroup\egroup\originalright}

\crefformat{equation}{#2(#1)#3}

\title[]{\huge N\lowercase{o-}$(k+1)$\lowercase{-in-line problem for large constant }$k$}
\author[]{\Large A\lowercase{lexandr }G\lowercase{rebennikov and }M\lowercase{atthew }K\lowercase{wan}}

\address{Institute of Science and Technology Austria (ISTA)}
\email{\href{mailto:aleksandr.grebennikov@ist.ac.at}{\nolinkurl{aleksandr.grebennikov@ist.ac.at}}}
\address{Institute of Science and Technology Austria (ISTA)}
\email{\href{mailto:matthew.kwan@ist.ac.at}{\nolinkurl{matthew.kwan@ist.ac.at}}}

\thanks{Both authors are supported by ERC Starting Grant “RANDSTRUCT” No. 101076777.}

\begin{document}

\begin{abstract}
    How many points can be placed in an $n\times n$ grid so that every (affine) line contains at most $k$ points? We prove that for $n \ge k \ge 10^{37}$ the maximum number of points is exactly $kn$. Our proof builds on the recent work of Kov\'acs, Nagy, and Szab\'o (who proved an analogous result when $k$ is at least about $\sqrt{n \log n}$), incorporating ideas of Jain and Pham.
    Using the same approach, we also obtain new bounds for higher-dimensional extensions of this problem.
\end{abstract}

\maketitle

\section{Introduction}

The \emph{no-three-in-line} problem, posed by Dudeney at the beginning of the 20th century \cite{dudeney-1917}, asks for the maximum number of points that can be placed on an $n \times n$ grid such that no three points are collinear. Despite significant attention over the years, the problem remains open: the best known upper bound $2n$ comes from the observation that each of the $n$ horizontal lines can contain at most two points, while the best known lower bound $(1.5-o(1))n$ comes from the \emph{modular hyperbola} construction of Hall, Jackson, Sudbery, and Wild \cite{HJSW-75}. For more history and background, see for example the surveys by Brass, Moser, and Pach \cite{brass-moser-pach-05} and Eppstein \cite{eppstein-18}, and Green's list of open problems \cite{green-100-open-problems}.

Several different conjectures were made concerning the asymptotic behaviour of the answer to the no-three-in-line problem, including suggestions that it could be roughly $1.5n$ \cite{green-100-open-problems}, roughly $2n$ \cite{brass-moser-pach-05}, or somewhere in between \cite{eppstein-18,guy-kelly-68}. For small values of $n$, examples attaining the trivial upper bound $2n$ are known~\cite{anderson-79,flammenkamp-92,flammenkamp-98}.

A natural generalisation of this problem, first studied by Brass and Knauer \cite{brass-knauer-03} in a more general context, is to ask for the maximum size of a subset of the $n \times n$ grid containing at most $k$ points on each line. Denoting this maximum by $f_k(n)$, the trivial upper bound is $f_k(n) \le kn$. Lefmann \cite[Proposition 2]{lefmann-12} proved that $f_k(n) = \Omega(kn)$ for every $k \ge 2$. Recently, Kov\'acs, Nagy, and Szab\'o \cite{KNS-25} showed that $f_k(n) = kn$ for each $k \ge C \sqrt{n \log n}$ (when $C > 12.5$ and $n$ is sufficiently large in terms of $C$). They also noted \cite[Section 4]{KNS-25} that, conditionally on some strong results about random regular graphs (i.e., the bipartite version of the Kim--Vu sandwich conjecture~\cite{KV-04}; see \cite{BIM,GIM-20,KRRS-23} for recent progress), their approach could yield an analogous statement for $k$ of order at least $\log n$.

We strengthen these results by proving that $f_k(n) = kn$ for every $n \ge k \ge K_0$, for some absolute constant $K_0$ (our proof gives $K_0 = 10^{37}$, but we did not attempt to optimise it).

\begin{theorem} \label{no-k+1-in-line}
    Let $n, k$ be integers such that $10^{37} \le k \le n$. Then there exists a subset $S$ of the $n \times n$ grid of size $kn$ such that every line contains at most $k$ points of $S$.
\end{theorem}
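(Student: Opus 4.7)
The plan is to construct $S$ probabilistically by identifying it with the edge set of a uniformly random bipartite $k$-regular graph $G$ on two vertex parts of size $n$ indexing the rows and columns of the grid. By construction, every row and every column contains exactly $k$ points of $S$, so $|S|=kn$ and axis-parallel lines automatically satisfy the constraint. It then remains to show that, with positive probability, every non-axis-aligned line $\ell$ contains at most $k$ points of $S$, which I would establish by a union bound.

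For a line $\ell$ with $m=m(\ell)$ grid points, the count $|E(G)\cap\ell|$ has mean $mk/n$, which equals $k$ only for the two main diagonals and is strictly smaller otherwise. I would bound its upper tail by comparison with the \emph{independent} model, in which each edge of $K_{n,n}$ is present with probability $p=k/n$ and $|E\cap\ell|\sim\mathrm{Bin}(m,p)$. There a Chernoff bound gives $\prob{\mathrm{Bin}(m,p)>k}\le e^{-\Omega(k\cdot I(m/n))}$ for an explicit rate function $I$. The transfer from the independent to the uniform regular model is where the ideas of Jain and Pham enter: via a switching or sandwiching argument, upper-tail events for ``local'' statistics in a uniformly random $k$-regular bipartite graph can be dominated, up to constant factors in the exponent, by the corresponding events in the independent model, which is how the KNS-style approach is to be strengthened far below the $\sqrt{n\log n}$ regime.

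Once such tail bounds are in hand, I would count lines by length. Non-axis-aligned lines with $m$ close to $n$ must have small primitive slope: only the slope-$\pm 1$ diagonals support $m>n/2$, and more generally lines with at least $m$ grid points can be enumerated by summing over primitive slopes $(a,b)$ with $\max(|a|,|b|)\lesssim n/m$. Combining the tail bounds with the line counts gives an expected number of bad lines of the form
\[
 \sum_m (\text{lines of length } m)\cdot e^{-\Omega(k\cdot I(m/n))},
\]
which, for $k$ a sufficiently large absolute constant, I would verify is $o(1)$; positive probability of a good outcome then follows by Markov's inequality.

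The principal obstacle is the regime $m=n-o(n/\sqrt{k})$: these near-diagonal lines have expected edge count within $O(\sqrt{k})$ of $k$, so the binomial upper tail is only $\Omega(1)$ and does not on its own beat the $\Theta(n/\sqrt{k})$ count of such lines. Here one has to (i) exploit negative correlations between edge indicators in a uniformly random regular graph to obtain concentration strictly better than binomial, and (ii) push the Jain--Pham machinery to apply at the $O(1)$ slack scale, so that the union bound still closes when $k$ is an absolute constant independent of $n$. Bridging this near-degenerate regime, with sharp enough constants to get all the way down to $k\ge 10^{37}$, is what has to replace the $k\gtrsim\sqrt{n\log n}$ threshold of Kov\'acs--Nagy--Szab\'o, and is where I expect almost all of the technical work to lie.
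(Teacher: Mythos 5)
Your approach does not work, and the obstacle you flag at the end is in fact fatal rather than merely technical. Consider just the main diagonal $\ell_0 = \{(i,i) : i \in [n]\}$: in a uniformly random $k$-regular bipartite graph, the number of edges on $\ell_0$ has mean exactly $k$ and (for $k$ fixed and $n$ large) is approximately Poisson$(k)$, so $\prob{|E(G)\cap\ell_0| > k} = \Theta(1)$. Negative association between edge indicators cannot rescue this -- the mean is exactly at the threshold, and the variance is $\Theta(k)$, so the upper tail is a constant independent of $n$. Thus the union bound fails already for a \emph{single} line, and there are $\Theta(n/\sqrt{k})$ more near-diagonal lines in the same boat. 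Moreover, the ``sandwiching/switching'' transfer you invoke is not what Jain and Pham did: it is the (unproven, bipartite) Kim--Vu sandwich conjecture, and the paper explicitly notes (following Kov\'acs--Nagy--Szab\'o) that even that would only reach $k\gtrsim\log n$, not constant $k$. Jain--Pham's actual contribution, and the engine of this paper, is an iterative subsampling scheme driven by the Lov\'asz Local Lemma that builds a \emph{spread} distribution over configurations with controlled line counts, which is a qualitatively different tool.

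The paper's proof reverses the order of your two steps and, critically, does not expect the diagonals to carry exactly $k$ points in expectation. It first proves a ``relaxed'' version (\cref{relaxed-version}) producing a set with exactly $k$ points in every row and column but allowing up to $k(w(L)+0.01)$ points on any other line; this is done by iteratively thinning $[n]^2$ with the local lemma (maintaining concentration for heavy lines, upper bounds for medium lines, an $O(1)$ cap for light lines, and a quasirandomness condition), and then regularising the resulting spread distribution via the Ore--Ryser/max-flow criterion. To then deduce \cref{no-k+1-in-line}, the grid is split into sixteen $n/4\times n/4$ blocks \`a la Kov\'acs--Nagy--Szab\'o, with the four diagonal blocks assigned a \emph{smaller} target density $\tfrac{2}{10}k$ and the off-diagonal blocks $\tfrac{3}{10}k$; this deterministic reweighting is precisely what drives the expected count on every non-axis line down to at most $0.84k$, eliminating the near-diagonal problem that your union bound cannot survive. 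Without an analogue of this deterministic, structural step, no concentration improvement in the random $k$-regular model will close the gap.
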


In a different work, Kov\'acs, Nagy, and Szab\'o \cite{KNS-25-algebraic} studied this problem in the regime when $k$ is small compared to $n$, using randomised algebraic constructions. In particular, they demonstrated that $f_k(n) \ge (k - 3) n$ when $n$ is sufficiently large in terms of $k$, and conjectured that $f_k(n) = kn + o(n)$ when $k = k(n) = \omega(1)$ and $n \to \infty$. \cref{no-k+1-in-line} confirms this conjecture in a strong sense.

Interestingly, all known linear-size constructions for the no-three-in-line problem are algebraic in nature (though there are random constructions \cite{eppstein-blog-18,ghosal-goenka-keevash-25} of size about $n/\sqrt{\log n}$). In a related question, Green \cite{green-100-open-problems} asked whether every ``large'' no-three-in-line set reduces to an algebraic curve modulo some prime. Although \cref{no-k+1-in-line} only gives no-$(k+1)$-in-line sets for large $k$, we remark that our constructions are purely probabilistic and do not exhibit any algebraic structure.

\subsection{Proof ideas}

Let $\mc{L}$ be the set of lines intersecting the $n \times n$ grid in at least two points. For each line $L \in \mc{L}$, define its \emph{weight} as 
\[
w(L) := \frac{|[n]^2 \cap L|}{n} \in [2/n, 1].
\]

The framework of Kov\'acs, Nagy, and Szab\'o \cite{KNS-25} allows us to reduce \cref{no-k+1-in-line} to \cref{relaxed-version} below, which permits a $0.01k$ excess over the expected number of points on the lines that are not horizontal or vertical. Therefore, the main content of this paper is the proof of \cref{relaxed-version} (though, for the reader's convenience, we also present a deduction of \cref{no-k+1-in-line} in \cref{sec:from-relaxed-to-main}).

\begin{theorem} \label{relaxed-version}
    Let $n, k$ be integers such that $10^{36} \le k \le 0.9 \cdot n$. Then there exists a subset $S \subseteq [n]^2$ that contains exactly $k$ points on each horizontal and each vertical line, and at most $k\cdot(w(L) + 0.01)$ points on every other line $L \in \mc{L}$. 
\end{theorem}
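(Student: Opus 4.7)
The plan is to take $S$ to be (the edge set of) a uniformly random $k$-regular bipartite graph with vertex parts $R \sqcup C$, where $R$ and $C$ are both identified with $[n]$ and indexed by rows and columns of the grid respectively. This construction automatically enforces exactly $k$ points on every horizontal line and every vertical line, so the entire content of the theorem reduces to controlling, for every line $L \in \mc{L}$ of finite non-zero slope, the count $|S \cap L|$.

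Fix such a line $L$ and set $m := nw(L)$. Because $L$ has finite non-zero slope, the set $M_L := L \cap [n]^2$ meets each row and each column in at most one point, i.e.\ it forms a partial matching of size $m$ between $R$ and $C$. By the row-column symmetry of the uniform $k$-regular bipartite model, every point of $[n]^2$ lies in $S$ with marginal probability $k/n$, so $\EE[|S \cap L|] = m \cdot k/n = k w(L)$. The theorem will follow once we establish a Chernoff-type upper-tail estimate
\[
\prob{|S \cap L| \ge kw(L) + 0.01 k} \le \exp(-c k)
\]
for some absolute $c > 0$, uniformly over non-axial $L$, after which a union bound over the $O(n^3)$ relevant lines closes the argument (the hypothesis $k \ge 10^{36}$ dwarfs $\log n$ in every regime compatible with $k \le 0.9n$).

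To produce the concentration inequality, the plan is first to carry it out in a tractable auxiliary model, namely the superposition $T$ of $k$ independent uniformly random permutation matrices. Concretely, $|T \cap L| = \sum_{j=1}^{k} N_j$, where $N_j$ counts the matches of the $j$-th permutation against $M_L$; each $N_j$ is a bounded random variable with mean $w(L)$ and, being the number of matches of a uniform random permutation against a fixed partial matching, is sharply concentrated (it is a $1$-Lipschitz function of $\pi_j$ under transpositions and obeys a standard martingale Bernstein bound). Summing over $k$ independent permutations yields the desired $\exp(-\Omega(k))$ upper tail for $|T \cap L|$, with plenty of room to accommodate the $0.01k$ slack.

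The main obstacle is then to \emph{transfer} the concentration from $T$ to $S$. Although the uniform $k$-regular bipartite model has strong global dependencies absent in $T$, the upper tail of $|S \cap L|$ should be no worse than that of $|T \cap L|$ up to a constant factor in the exponent. This transfer is where we follow the ideas of Jain and Pham: using their coupling / sandwich between highly constrained combinatorial samples and simpler product-like models, one can dominate the edge-indicator distribution of the uniform $k$-regular bipartite graph by that of a slightly enlarged permutation superposition. The bulk of the proof will be devoted to adapting this sandwich to our setting with constants sharp enough to function already at $k = 10^{36}$ (rather than merely asymptotically) and with a quantitative loss small enough that the $0.01k$ slack in the theorem suffices to absorb it.
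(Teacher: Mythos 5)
Your plan has a fatal gap at the union-bound step, and the gap is precisely the issue that forces the paper to abandon this kind of argument.

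You assert that ``the hypothesis $k \ge 10^{36}$ dwarfs $\log n$ in every regime compatible with $k \le 0.9n$.'' But $k \le 0.9n$ is a \emph{lower} bound on $n$ in terms of $k$, not an upper bound: for a fixed $k = 10^{36}$, the theorem must hold for \emph{every} $n \ge k/0.9$, so $n$ (and hence $\log n$) can be arbitrarily large while $k$ stays put. As soon as $\log n \gg c\,k$, a union bound of $\exp(-ck)$ over the roughly $n^{3}$ non-axial lines is hopeless. This is exactly what the paper identifies as the reason the Kov\'acs--Nagy--Szab\'o ``random sample + concentration + union bound'' strategy only works for $k \gtrsim \sqrt{n\log n}$, and (conditional on the bipartite Kim--Vu sandwich conjecture, which is essentially the permutation-superposition coupling you invoke to transfer tails from $T$ to $S$) could at best be pushed down to $k \gtrsim \log n$, still far from constant $k$. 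So even granting you the entire Jain--Pham sandwich machinery in ideal form, your proof could not reach the regime $k = O(1)$, $n \to \infty$ that the theorem covers.

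The paper sidesteps the union bound entirely by building $S$ through an iterative subsampling procedure: it maintains a ``nice'' (concentration-controlled) configuration at a sequence of densities $m_{(r)} > m_{(r-1)} > \cdots > m_{(1)} = (1+\eps)k$, applying the Lov\'asz Local Lemma at each step rather than a global union bound, so that failure probabilities only need to beat \emph{local} dependency degrees (polynomial in the current density $m$), not the global line count $n^{3}$. Moreover, instead of a random $k$-regular bipartite graph, it constructs a \emph{spread distribution} over nice sets and then, as a final step, uses the spreadness together with the Ore--Ryser criterion to extract a $k$-regular spanning subgraph from a sample. Your intuition that $k$-regularity in rows/columns is the right target and that Jain--Pham's ideas are relevant is correct, but the relevant Jain--Pham import here is the iterated-LLL-with-spread-distributions framework, not the sandwich coupling, and the concentration step is local (LLL) rather than global (union bound).
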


Kov\'acs, Nagy, and Szab\'o \cite{KNS-25} were able to prove a result along the lines of \cref{relaxed-version} under the much stronger assumption that $k$ is at least about $\sqrt{n\log n}$. They did this by choosing the desired point set \emph{randomly}, using concentration inequalities to show that the number of points on each line is unlikely to deviate much from its expectation.
This works when $k$ grows sufficiently fast as a function of $n$ but breaks down for constant $k$: the main issue is that a union bound over all lines becomes too inefficient.

To overcome this issue, we build on ideas from the work of Jain and Pham \cite{jain-pham-24} on optimal thresholds for Latin squares in random hypergraphs. Namely, we design an iterative subsampling procedure that uses the Lov\'asz Local Lemma at each step, maintaining control over the number of points on each line (roughly speaking, we maintain tight control over the number of points on lines whose weight exceeds some threshold, gradually raising this threshold as the procedure continues and the light lines become increasingly unimportant).
Moreover, at each step of the procedure, instead of constructing a single configuration of points, we construct a ``spread'' probability distribution over suitable configurations. We then use this spread property to show that a sample from the final distribution obtained from our procedure is likely to satisfy a certain Hall-type condition, which provides a subset containing exactly $k$ points on each horizontal and each vertical line.

\subsection{Higher dimensions} A further generalisation of this problem, introduced by Brass and Knauer \cite{brass-knauer-03}, is to ask for the maximum size of a subset of the $d$-dimensional grid $[n]^d$ that contains at most $k$ points in each affine subspace of dimension $t$ (where $1 \le t \le d-1$). Denoting this maximum by $f_{k, d, t}(n)$, the trivial upper bound is $f_{k, d, t}(n) \le k n^{d-t}$. 

Brass and Knauer \cite[Lemma 8]{brass-knauer-03} proved that $f_{k, d, t}(n) = \Omega(n^{d-t-(d(t+1))/k})$, which was slightly sharpened by Lefmann \cite[Lemma 2]{lefmann-12} to $\Omega(n^{d-t-(t(d+1))/k})$.
Later, Sudakov and Tomon \cite[Theorem 1.4]{sudakov-tomon-24} established the optimal bound $f_{k, d, t}(n) = \Omega_d(n^{d-t})$ when $k$ is sufficiently large in terms of $d$, and Ghosal, Goenka, and Keevash \cite[Theorem 1.1]{ghosal-goenka-keevash-25} extended this result to all $k \ge d+1$.

However, all existing lower bounds differ from the trivial upper bound by a multiplicative factor of order $k$. We close this gap asymptotically as $k \to \infty$ by showing that in this regime $f_{k, d, t}(n) = (1-o_d(1)) k n^{d-t}$.

\begin{theorem} \label{higher-dimensions}
    For an integer $d$ and $\eps > 0$, there exists $K_1 = K_1(\eps, d)$ such that the following holds. For all integers $n, k, t$ such that $K_1 \le k \le n^t$ and $1 \le t \le d-1$, there exists a subset $S \subseteq [n]^d$ that contains at most $k$ points in each $t$-dimensional affine subspace of $\RR^d$, and contains at least $(1-\eps)k$ points in each axis-aligned $t$-dimensional affine subspace of $\RR^d$ that intersects $[n]^d$. As a consequence,
    \[
    f_{k, d, t}(n) \ge (1-\eps) k n^{d-t}.
    \]
\end{theorem}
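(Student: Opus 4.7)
The plan is to adapt the iterative subsampling and Lov\'asz Local Lemma framework used to prove \cref{relaxed-version}, now working with $t$-dimensional affine subspaces of $\RR^d$ in place of lines in the plane. A key simplification is that \cref{higher-dimensions} asks only for a one-sided lower bound $(1-\eps)k$ on axis-aligned $t$-subspaces rather than equality to $k$; consequently the Hall-matching step used in \cref{sec:from-relaxed-to-main} to deduce \cref{no-k+1-in-line} from \cref{relaxed-version}, and the spread-distribution machinery supporting it, can be entirely skipped.

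I would start by defining, for each $t$-dimensional affine subspace $V \subseteq \RR^d$ meeting the grid in more than one point, the weight $w(V) := |V \cap [n]^d|/n^t \in (0,1]$. Axis-aligned $t$-subspaces have weight exactly $1$, so the desired property of $S$ is that $|V \cap S| \le k\bigl(w(V) + \eps\bigr)$ for every $t$-subspace and $|V \cap S| \ge (1-\eps)k$ for every axis-aligned one. Starting from an appropriate random subset of $[n]^d$, I would apply an iterative refinement in which, at each stage, a mild random thinning is performed and the LLL is used to preserve approximate control on those subspaces whose weight exceeds a current threshold; the threshold is raised gradually, exactly as in the proof of \cref{relaxed-version} and in the work of Jain and Pham, so that progressively fewer subspaces need tight control and the LLL conditions remain satisfiable throughout. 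At the end, only the axis-aligned subspaces (which always have weight $1$, and are few in number) still need to be tightly controlled, giving both the upper and lower bounds simultaneously.

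The main obstacle is the higher-dimensional counting input: to verify the LLL conditions at every iteration one needs an upper bound on the number of $t$-dimensional affine subspaces of $\RR^d$ that contain at least $\ell$ points of $[n]^d$, together with good control on the dependency degrees. In two dimensions this is an elementary point-on-a-line counting estimate; in higher dimension the analogue can be obtained by induction on $d$ and a projection argument (since a $t$-subspace with many grid points can be reconstructed from $t+1$ affinely independent grid points, and projects onto lower-dimensional affine subspaces where the inductive bound applies). Tracking the constants so that the iteration closes for every $k \ge K_1(\eps, d)$ (in particular for $k$ constant in $n$) is the delicate part; the dimensional factors from the counting bound and the slack in $\eps$ are absorbed into $K_1$.
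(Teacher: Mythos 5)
Your overall plan coincides with the paper's: Section 3 abstracts the planar argument to hypergraphs with polynomial tails (\cref{def:polynomial-tails}), defines $m$-good sets with tight two-sided control on heavy edges, an upper bound on medium edges and an $O_d(1)$ bound on light edges (\cref{def:good}), and produces them by a downward induction in which each step is a random thinning justified by the local lemma (\cref{one-iteration-hypergraphs}); and, exactly as you say, no spreadness or regularisation step is needed here. One minor calibration slip: your stated target ``$|V\cap S|\le k(w(V)+\eps)$ for every $t$-subspace'' does not imply the theorem's requirement of at most $k$ points, since an axis-aligned subspace has weight $1$ and your bound then allows $(1+\eps)k$ points. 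The fix is what the paper does: run the whole construction at density about $(1-\eps/2)k$ (take a $(1-\eps/2)k$-good set), so that the upper bounds for heavy, medium and light edges are all at most $k$ while axis-aligned subspaces still receive at least $(1-\eps)k$ points. Also note that the tight (two-sided) control must be maintained for \emph{all} heavy subspaces throughout the iteration, not only the axis-aligned ones at the end, since it is exactly this control at the previous scale that feeds the Chernoff estimates at the next scale.

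The substantive gap is the counting input, which you correctly single out as the crux but do not actually establish. Knowing that a $t$-flat is determined by $t+1$ affinely independent grid points gives only a bound of shape $n^{O_d(1)}$, with no dependence on $\alpha$; what the LLL step needs (and what \cref{polynomial-tails-subspaces} supplies) is that for each grid point $x$, the number of intersections $F=F'\cap[n]^d$ with $x\in F$ and $|F|\ge\alpha n^t$ is $O_d(\alpha^{-d})$, uniformly in $n$. The paper proves this by passing to the lattice $\Gamma$ generated by $F-x$, showing by a packing/volume argument that $\det\Gamma=O_d(\alpha^{-1})$, and then invoking Schmidt's theorem that the number of sublattices of $\ZZ^d$ with determinant at most $M$ is $O_d(M^d)$. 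Your projection-and-induction route does not close as stated: distinct rich $t$-flats in $\RR^d$ through $x$ can project to the same $t$-flat of $\RR^{d-1}$ (the fibre over a projected flat is a $(t+1)$-flat containing infinitely many $t$-flats, and bounding how many of those are rich is essentially the original problem again), and flats containing the projection direction require separate treatment. If you wish to avoid Schmidt's theorem, an elementary substitute is a successive-minima/box-counting argument: a flat of weight at least $\alpha$ through $x$ must be spanned by $x$ together with linearly independent integer vectors of length $O_d(\alpha^{-1})$, giving $O_d(\alpha^{-dt})$ choices; this is precisely the locality statement that your ``reconstruction from $t+1$ points'' implicitly needs, and it is the part that has to be proved rather than asserted.
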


The proof of \cref{higher-dimensions} follows the same overall scheme as the proof of \cref{relaxed-version}, but is significantly simpler (because it does not aim for an exact bound). So, the rest of the paper is organised as follows. After reviewing several known results in \cref{sec:preliminaries}, we present the short proof of \cref{higher-dimensions} in \cref{sec:higher-dimensions}. Then, we prove \cref{relaxed-version} in \cref{sec:relaxed-version}, use it to derive \cref{no-k+1-in-line} in \cref{sec:from-relaxed-to-main}, and discuss related questions and further directions in \cref{sec:concluding-remarks}.

\subsection*{Acknowledgements} We would like to thank Huy Pham for insightful discussions about the proof approach.

\section{Preliminaries}\label{sec:preliminaries}

\subsection*{Notation} For a positive integer $n$, we write $[n]=\{1, \ldots, n\}$. For non-negative reals $a, b$ we use the shorthand $a \pm b$ to denote the interval $[a-b, a+b]$. For an event $\mc{E}$ in some probability space, we denote its complement by $\overline{\mc{E}}$. For functions $f=f(n)$ and $g=g(n)$, we write $f=O(g)$ to mean that there is a constant $C$ such that $|f| \le C|g|, f=\Omega(g)$ to mean that there is a constant $c>0$ such that $f(n) \ge c|g(n)|$ for sufficiently large $n$. Subscripts on asymptotic notation indicate quantities that should be treated as constants. For a (hyper)graph $G$, we write $V(G)$ and $E(G)$ for its vertex and edge sets, respectively. For a graph $G$ and subsets $A, B \subseteq V(G)$, we write $E_G(A, B)$ for the set of edges between $A$ and $B$ in $G$, and $e_G(A, B) = |E_G(A, B)|$. For a vertex $v$ of a graph $G$, we write $\deg_G(v)$ for its degree in $G$.

We use the following standard version of the Chernoff bound.

\begin{proposition} \label{Chernoff}
    Let $X_1, \ldots, X_n$ be independent $\Ber(p)$ random variables. Then, for every $\delta \in (0, 1)$, 
    \[
    \prob{X_1 + \ldots + X_n \notin (1 \pm \delta) pn} \le 2 \exp\left(-\frac{\delta^2 pn}{3}\right).
    \] 
\end{proposition}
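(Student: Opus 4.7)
The plan is to prove the two-sided concentration bound by the classical Chernoff/Bernstein moment generating function (MGF) method, handling the upper and lower tails separately and combining them via a union bound, which accounts for the factor of $2$. Let $S = X_1 + \cdots + X_n$, so $\EE[S] = pn$.

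For the upper tail, I would fix a parameter $t > 0$ and apply Markov's inequality to $e^{tS}$, yielding $\prob{S \ge (1+\delta)pn} \le e^{-t(1+\delta)pn}\,\EE[e^{tS}]$. By independence of the $X_i$, the MGF factorises as $\EE[e^{tS}] = (1 - p + p e^t)^n$, and using the elementary inequality $1+x \le e^x$ I would bound this by $\exp(pn(e^t - 1))$. Optimising in $t$ by setting $t = \ln(1+\delta)$ produces the classical form
\[
\prob{S \ge (1+\delta)pn} \le \left(\frac{e^\delta}{(1+\delta)^{1+\delta}}\right)^{pn} = \exp\!\big(-pn\cdot \varphi(\delta)\big),
\]
where $\varphi(\delta) := (1+\delta)\ln(1+\delta) - \delta$. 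To reach the stated form it then suffices to verify the elementary inequality $\varphi(\delta) \ge \delta^2/3$ on $(0,1)$.

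The lower tail is entirely symmetric: I would apply Markov to $e^{-tS}$ with $t>0$, use $\EE[e^{-tX_i}] \le \exp(p(e^{-t}-1))$, and optimise at $t = -\ln(1-\delta)$ to obtain $\prob{S \le (1-\delta)pn} \le \exp(-pn\cdot\psi(\delta))$ with $\psi(\delta) := (1-\delta)\ln(1-\delta) + \delta$. The corresponding elementary inequality $\psi(\delta) \ge \delta^2/2 \ge \delta^2/3$ for $\delta\in(0,1)$ is easier (it follows directly from Taylor-expanding $\ln(1-\delta)$ and comparing term by term). Combining the two tail bounds via the union bound then gives the stated two-sided inequality with the prefactor $2$.

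The only non-mechanical step, and hence the main (though still routine) obstacle, is verifying $\varphi(\delta) \ge \delta^2/3$ on the full range $(0,1)$. One clean approach is to set $f(\delta) = \varphi(\delta) - \delta^2/3$ and observe $f(0)=f'(0)=0$, with $f'(\delta) = \ln(1+\delta) - 2\delta/3$ and $f''(\delta) = 1/(1+\delta) - 2/3$. Then $f''\ge 0$ on $[0,1/2]$ and $f''\le 0$ on $[1/2,1]$, so $f'$ is unimodal on $[0,1]$ with $f'(1/2) = \ln(3/2) - 1/3 > 0$ and $f'(1) = \ln 2 - 2/3 > 0$; since $f'(0)=0$ and $f'$ rises then falls while staying positive at both $1/2$ and $1$, we get $f' \ge 0$ throughout, hence $f \ge 0$ on $[0,1]$. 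Everything else in the argument is completely standard.
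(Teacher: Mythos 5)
Your proof is correct: the MGF computations, the choice of optimising parameters, and the calculus verification that $(1+\delta)\ln(1+\delta)-\delta \ge \delta^2/3$ and $(1-\delta)\ln(1-\delta)+\delta \ge \delta^2/2$ on $(0,1)$ all check out, and the union bound gives the factor $2$. The paper states this proposition as a standard Chernoff bound without proof, and your argument is precisely the standard one it implicitly relies on, so there is nothing to compare beyond noting the match.
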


The following version of the Lov\'asz Local Lemma (also employed in \cite{HSS-11,jain-pham-24}) provides control over an ensemble of independent random variables, conditioned on the non-occurrence of a family of ``bad events''. Its proof follows from the standard inductive proof of the Lov\'asz Local Lemma (see \cite[Chapter 5.1]{alon-spencer-16}).

\begin{proposition}[{\cite[Proposition 6]{jain-pham-24}}] \label{local-lemma}
    Given independent random variables $\{\xi_i\}_{i \in I}$ and events $\{\mc{E}_j\}_{j \in J}$, where each event $\mc{E}_j$ depends on a subset $X_j \subseteq I$ of variables. Assume each event has probability at most $q$, and that each set $X_j$ intersects at most $\Delta$ other sets from $\{X_{j'}\}_{j' \in J}$. If $4 q \Delta \le 1$, then 
    \begin{enumerate}
        \item[1.] $\mathbb{P}\big[\bigcap_{j \in J} \overline{\mc{E}_j}\,\big] > 0$, and
        \item[2.] for every event $\mc{E}$ depending on a subset of variables $X \subseteq I$ which intersects at most $M$ of the sets $\{X_j\}_{j \in J}$,
        \[
        \mathbb{P}\Big[\mc{E}\,\Big|\, \bigcap_{j \in J} \overline{\mc{E}_j}\Big] \le \mathbb{P}[\mc{E}] \cdot \exp(6qM).
        \]
    \end{enumerate}
\end{proposition}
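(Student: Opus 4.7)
The plan is to mimic the standard inductive proof of the symmetric Lovász Local Lemma, but with a strengthened inductive hypothesis that already incorporates the multiplicative distortion bound needed for part 2. Concretely, I would prove by induction on $|S|$ the following unified claim: for every subset $S \subseteq J$ and every event $\mathcal{F}$ depending on a variable set $Y \subseteq I$ that intersects at most $M'$ of the sets $\{X_j\}_{j \in S}$,
\[
\Pr\!\left[\mathcal{F} \,\Big|\, \bigcap_{j \in S} \overline{\mathcal{E}_j}\right] \le \Pr[\mathcal{F}] \cdot \exp(6qM').
\]
Part~2 of the proposition is the specialization $S = J$, $\mathcal{F} = \mathcal{E}$, $M' = M$. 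For part~1, applying the claim with $\mathcal{F} = \mathcal{E}_i$ (so $M' \le \Delta$) yields $\Pr[\mathcal{E}_i \mid \bigcap_{S} \overline{\mathcal{E}_j}] \le q\exp(6q\Delta) \le qe^{3/2} < 1$, and the chain rule
\[
\Pr\!\left[\bigcap_{i \in J} \overline{\mathcal{E}_i}\right] \;=\; \prod_i \Pr\!\left[\overline{\mathcal{E}_i} \,\Big|\, \bigcap_{i' < i} \overline{\mathcal{E}_{i'}}\right]
\]
then exhibits a strictly positive product.

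For the inductive step I would split $S = S_0 \sqcup S_1$ with $S_1 = \{j \in S : X_j \cap Y \ne \emptyset\}$ (so $|S_1| \le M'$) and write
\[
\Pr\!\left[\mathcal{F} \,\Big|\, \bigcap_{j \in S} \overline{\mathcal{E}_j}\right] \;=\; \frac{\Pr\!\left[\mathcal{F} \cap \bigcap_{j \in S_1}\overline{\mathcal{E}_j} \,\Big|\, \bigcap_{j \in S_0}\overline{\mathcal{E}_j}\right]}{\Pr\!\left[\bigcap_{j \in S_1}\overline{\mathcal{E}_j} \,\Big|\, \bigcap_{j \in S_0}\overline{\mathcal{E}_j}\right]}.
\]
The numerator is at most $\Pr[\mathcal{F}]$: by construction $Y$ is disjoint from every $X_j$ with $j \in S_0$, so $\mathcal{F}$ is independent through the underlying $\xi_i$'s of the conditioning event $\bigcap_{S_0}\overline{\mathcal{E}_j}$. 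For the denominator, a union bound gives the lower bound $1 - \sum_{j \in S_1}\Pr[\mathcal{E}_j \mid \bigcap_{S_0}\overline{\mathcal{E}_{j'}}]$; applying the induction hypothesis to each term (with $\mathcal{F} = \mathcal{E}_j$ and noting that $X_j$ meets at most $\Delta$ of the $X_{j'}$ with $j' \in S_0$) produces a per-term bound of $q \exp(6q\Delta) \le qe^{3/2}$, hence the denominator is at least $1 - M' q e^{3/2}$.

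The main technical obstacle is the calibration of constants: the factor $6$ in the exponent must be chosen so that the inductive step closes, i.e., so that $1 - M'qe^{3/2} \ge \exp(-6qM')$ whenever the target bound is non-vacuous. This reduces to an elementary one-variable inequality in $t = qM'$, where at $t = 0$ both sides agree and the right-hand side decreases faster (since $6 > e^{3/2}$); in the range where this inequality would otherwise fail, one instead observes that $\Pr[\mathcal{F}]\cdot \exp(6qM') \ge 1$ and the claim is trivial. Once this bookkeeping is dispensed with, the argument is a direct line-by-line adaptation of the textbook inductive proof of the LLL in \cite[Chapter~5.1]{alon-spencer-16}; the only new feature is that the induction hypothesis is phrased for an arbitrary test event $\mathcal{F}$ rather than for a bad event $\mathcal{E}_i$, which is precisely what gives part~2 for free once the induction has been set up.
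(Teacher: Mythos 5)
Your overall architecture is right---a numerator/denominator decomposition with $S = S_0 \sqcup S_1$, independence giving $\Pr[\mathcal{F} \cap \bigcap_{S_1}\overline{\mathcal{E}_j} \mid \bigcap_{S_0}\overline{\mathcal{E}_j}] \le \Pr[\mathcal{F}]$, and an induction to control the denominator. This is indeed the shape of the argument behind the Jain--Pham proposition (and the paper itself just cites it, referring to the ``standard inductive proof''). However, the denominator step as you have written it has a genuine gap.

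You lower-bound the denominator by a union bound, $1 - \sum_{j\in S_1}\Pr[\mathcal{E}_j \mid \bigcap_{S_0}\overline{\mathcal{E}_{j'}}] \ge 1 - M'q e^{3/2}$, and then need $1 - M'qe^{3/2} \ge e^{-6qM'}$. Writing $t = qM'$, that fails as soon as $t$ exceeds roughly $0.1$, and---unlike in the classical LLL induction, where the relevant count is $|S_1|\le \Delta$ so $q|S_1|\le 1/4$---here $M'$ is \emph{not} bounded by $\Delta$. In the very applications in this paper, $M$ is taken as large as $D|T|$ or as $N\approx 20m^{15}n$, so $qM$ can be enormous. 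Your fallback (``then $\Pr[\mathcal{F}]\exp(6qM')\ge 1$ and the claim is trivial'') does not hold: $qM'$ being moderately large says nothing about $\Pr[\mathcal{F}]$, and one can easily have, say, $qM'=1$ while $\Pr[\mathcal{F}]$ is astronomically small, in which case $\Pr[\mathcal{F}]\exp(6qM')\ll 1$ and the bound is non-vacuous, yet $1-M'qe^{3/2}<0$ and the inductive step produces nothing.

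The standard repair is to replace the union bound for the denominator by a telescoping product. Order $S_1 = \{j_1,\dots,j_s\}$ and write
\[
\Pr\Bigl[\bigcap_{j\in S_1}\overline{\mathcal{E}_j}\,\Big|\,\bigcap_{j\in S_0}\overline{\mathcal{E}_j}\Bigr] = \prod_{i=1}^s \Pr\Bigl[\overline{\mathcal{E}_{j_i}}\,\Big|\,\bigcap_{j\in S_0\cup\{j_1,\dots,j_{i-1}\}}\overline{\mathcal{E}_j}\Bigr].
\]
Each factor here is $1$ minus a single conditional bad-event probability, so the product never goes negative regardless of $s$. Moreover, for these per-factor bounds you should not invoke your unified claim (which would give $qe^{6q\Delta}\le qe^{3/2}$, and $qe^{3/2}$ can exceed $1$ when $\Delta=1$); instead first prove, by the usual LLL induction, the sharper statement that $\Pr[\mathcal{E}_{j_0}\mid\bigcap_{j\in S}\overline{\mathcal{E}_j}]\le 2q$ for all $j_0\notin S$ (this is where $4q\Delta\le 1$ enters, via $|S_1|\le\Delta$ and $\sum_{j\in S_1}2q\le 2q\Delta\le 1/2$). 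With that in hand, the denominator is at least $(1-2q)^{|S_1|} \ge (1-2q)^{M}$; since $q\le 1/4$, $\ln(1-2q)\ge -2q/(1-2q)\ge -4q$, so the denominator is at least $e^{-4qM}\ge e^{-6qM}$, which closes the argument for part 2. Part 1 then follows exactly as you wrote via the chain rule. So the idea is sound, but the union bound must be replaced by the telescoping product, and the per-bad-event estimate should be established separately with its own (tighter) induction before the general-event conditional estimate is derived from it.
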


Also, we recall the classical Ore--Ryser criterion for the existence of a $k$-regular spanning subgraph in a bipartite graph (see e.g.\ \cite[\S7, Ex.\ 16]{lovasz-book}), which also follows from the max-flow--min-cut theorem.

\begin{proposition} \label{max-flow-min-cut}
    A bipartite graph $G$ with parts $A_0$ and $B_0$ of equal size has a $k$-regular spanning subgraph if and only if for all subsets $A \subseteq A_0$ and $B \subseteq B_0$, we have
    \[
    e_G(A, B_0 \setminus B) \ge k(|A| - |B|).
    \]
\end{proposition}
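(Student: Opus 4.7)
The plan is to reduce the existence of a $k$-regular spanning subgraph to an integral max-flow question and to extract the Ore--Ryser condition from the min-cut side of that equivalence. The easy direction is immediate: if $H \subseteq G$ is a $k$-regular spanning subgraph, then for any $A \subseteq A_0$ and $B \subseteq B_0$, the $k|A|$ edges of $H$ incident to $A$ contribute at most $k|B|$ to $e_H(A, B)$, so
\[
e_G(A, B_0 \setminus B) \ge e_H(A, B_0 \setminus B) \ge k|A| - k|B|.
\]

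For the non-trivial direction, I would construct an auxiliary network $N$ as follows: add a source $s$ with an arc of capacity $k$ to each $a \in A_0$, a sink $t$ with an arc of capacity $k$ from each $b \in B_0$, and orient each edge of $G$ from $A_0$ to $B_0$ with capacity $1$. Because $|A_0| = |B_0|$, a $k$-regular spanning subgraph of $G$ corresponds exactly to an integral $s$--$t$ flow in $N$ of value $k|A_0|$ (the unit-flow arcs among the $G$-edges form the required subgraph, and conversely the flow conservation at each $a \in A_0, b \in B_0$ gives the $k$-regularity). Since all capacities are integers, max-flow--min-cut guarantees such a flow exists if and only if every $s$--$t$ cut of $N$ has capacity at least $k|A_0|$.

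It remains to match this min-cut condition with the stated Ore--Ryser inequality. Every $s$--$t$ cut has source-side vertex set of the form $\{s\} \cup A \cup B$ for some $A \subseteq A_0$ and $B \subseteq B_0$, and its capacity equals $k|A_0 \setminus A| + k|B| + e_G(A, B_0 \setminus B)$: the first term counts arcs from $s$ to $A_0 \setminus A$, the second counts arcs from $B$ to $t$, and the third counts the $G$-arcs that cross the cut. Requiring this quantity to be at least $k|A_0|$ for all $A, B$ simplifies precisely to $e_G(A, B_0 \setminus B) \ge k(|A| - |B|)$, closing the loop. There is no real obstacle in this argument; the only care needed is to ensure all $s$--$t$ cuts are parametrised in the claimed way and to track which arcs cross each cut in the correct direction, and to invoke integrality of the max-flow (guaranteed by integer capacities).
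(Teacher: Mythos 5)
Your proof is correct and follows precisely the route the paper itself indicates, namely reducing the Ore--Ryser criterion to max-flow--min-cut on the auxiliary network with source/sink arc capacities $k$ and $G$-edge capacities $1$; the cut parametrisation and the algebra recovering the stated inequality are all accurate. The paper states the proposition without proof, citing Lov\'asz and remarking that it follows from max-flow--min-cut, so your argument is exactly the intended one.
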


Finally, we need the following facts about the number of integer points on lines in the plane (\cref{polynomial-tails-lines}), and, more generally, on $t$-dimensional affine subspaces in $\RR^d$ (\cref{polynomial-tails-subspaces}).

\begin{proposition} \label{polynomial-tails-lines}
    Let $\mc{L}$ be the set of lines in $\RR^2$ intersecting the grid $[n]^2$ in at least two points. Then, for every point $(x_1, x_2) \in [n]^2$ and $\alpha \in (0, 1]$, 
    \[
    \big|\big\{L \in \mc{L} : (x_1, x_2) \in L, \; |[n]^2 \cap L| \ge \alpha n \big\}\big| \le 18\alpha^{-2}.
    \]
\end{proposition}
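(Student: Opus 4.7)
The plan is to parameterise the lines through $(x_1,x_2)$ by primitive integer direction vectors, and then to bound the magnitudes of these vectors in terms of $\alpha$.

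First I would observe that every line $L\in\mc{L}$ passing through $(x_1,x_2)$ is of the form $L\cap\ZZ^2=\{(x_1+ja,\,x_2+jb):j\in\ZZ\}$ for a unique (up to sign) primitive integer vector $(a,b)$, so lines through $(x_1,x_2)$ are indexed by equivalence classes $\{(a,b),(-a,-b)\}$ of nonzero primitive vectors. Then, writing $M:=\max(|a|,|b|)\ge 1$, I would bound $|[n]^2\cap L|$ by noting that the number of integers $j$ with $x_1+ja\in[n]$ is at most $(n-1)/|a|+1$ when $a\neq 0$ (and analogously for $b$); taking the smaller of the two bounds gives $|[n]^2\cap L|\le(n-1)/M+1$, and this formula also handles the axis-aligned cases (where $M=1$ and the line has exactly $n$ points). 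Combined with the hypothesis $|[n]^2\cap L|\ge\alpha n$, a short calculation forces $M\le 2/\alpha$ whenever $\alpha n\ge 2$.

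The last step is a direct count of equivalence classes: the number of ordered integer pairs $(a,b)$ with $|a|,|b|\le\lfloor 2/\alpha\rfloor$ is at most $(4/\alpha+1)^{2}$, so after removing the zero vector and identifying opposite pairs there are at most $((4/\alpha+1)^{2}-1)/2=8\alpha^{-2}+4\alpha^{-1}\le 12\alpha^{-2}\le 18\alpha^{-2}$ classes, using $\alpha\in(0,1]$ in the last two inequalities. This yields the stated bound. The degenerate regime $\alpha n<2$ is trivial, since then $18\alpha^{-2}>4n^{2}$ already exceeds the crude bound $n^{2}$ on the total number of lines through $(x_1,x_2)$ that contain any other point of $[n]^{2}$.

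I do not expect a real obstacle here: the proof is an elementary count of primitive directions of bounded height, and the only care needed is in tracking constants to stay below the stated $18$ and in separately handling the edge case $\alpha n<2$.
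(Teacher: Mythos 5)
Your proof is correct, and it is essentially the same argument as the paper's: both rest on the observation that a line through $(x_1,x_2)$ containing at least $\alpha n$ grid points must have lattice-point spacing $O(1/\alpha)$, followed by a trivial $O(\alpha^{-2})$ count, with the degenerate case $\alpha n<2$ handled by the crude bound on all lines through $(x_1,x_2)$ exactly as the paper handles $\alpha\le 2/n$. The only cosmetic difference is that you enumerate primitive direction vectors of height at most $2/\alpha$ (giving the constant $12$), while the paper enumerates the possible second lattice points in a rectangle of side $O(1/\alpha)$ around $(x_1,x_2)$ (giving the constant $18$).
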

\begin{proof}
    If $\alpha \le 2/n$, then the total number of lines in $\mc{L}$ containing $(x_1, x_2)$ is at most $n^2 \le 4\alpha^{-2}$. Otherwise, let $\ell := \lceil 2\alpha^{-1} \rceil \le n$. If $(x_1, x_2)$ is the only integer point within the rectangle $[x_1, x_1+\ell-1] \times [x_2-(\ell-1), x_2+(\ell-1)]$ on some line $L \in \mc{L}$, then
    \[
    w(L) = \frac{|[n]^2 \cap L|}{n} < \frac{n/\ell + 1}{n} \le \frac{2n/\ell}{n} \le \alpha.
    \]
    Therefore, since every two points determine a unique line, the desired number of lines is bounded by the number of integer points in this rectangle, which is at most $2\ell^2 \le 2(2\alpha^{-1}+1)^2 \le 18\alpha^{-2}$.
\end{proof}

\begin{proposition} \label{polynomial-tails-subspaces}
    Let $n, d, t$ be integers such that $1 \le t \le d-1$, and let $\mc{F}_{d, t}$ denote the set of $t$-dimensional affine subspaces of $\RR^d$. Then, for every point $x \in [n]^d$ and $\alpha \in (0, 1]$, 
    \begin{equation} \label{eq:H-count}
    \big|\big\{ F \subseteq [n]^d : x \in F,\; |F| \ge \alpha n^t, \; F = F' \cap [n]^d \text{ for some } F' \in \mc{F}_{d, t} \big\}\big| = O_d(\alpha^{-d}).
    \end{equation}
\end{proposition}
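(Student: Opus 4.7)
The plan is to translate the count of subspaces into a count of sublattices of $\ZZ^d$ of bounded covolume, generalizing the argument of \cref{polynomial-tails-lines}. Since $x \in [n]^d \subseteq \ZZ^d$, for each admissible subspace $F$ I associate the saturated sublattice $L := (F - x) \cap \ZZ^d$ of $\ZZ^d$, whose rank $t'$ lies in $\{0, 1, \ldots, t\}$ (it could be strictly less than $t$ if $F$ has an ``irrational'' direction). Because $F \cap [n]^d = (x + L) \cap [n]^d$ depends only on $L$, it suffices to count saturated sublattices of $\ZZ^d$ of rank at most $t$ with $|(x + L) \cap [n]^d| \ge \alpha n^t$.

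Next I would bound the covolume $\Delta := \det L$. A standard volume estimate, comparing the number of lattice points of $x + L$ in $[0, n]^d$ to the $t'$-dimensional volume of $(x + \mathrm{span}_\RR L) \cap [0, n]^d$ divided by $\Delta$, gives $|(x + L) \cap [n]^d| \le c_d n^{t'}/\Delta$. The hypothesis $|F| \ge \alpha n^t$ then forces $\Delta \le c_d n^{t'-t}/\alpha$. Since $L \subseteq \ZZ^d$ implies $\Delta \ge 1$, ranks $t' < t$ can contribute only when $\alpha \le c_d / n^{t-t'}$; in the typical regime $\alpha \gg 1/n$ only $t' = t$ matters.

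The final step is to count saturated rank-$t'$ sublattices of $\ZZ^d$ of covolume at most $D := c_d n^{t'-t}/\alpha$, and show this count is $O_d(D^d)$. This is a classical counting result (essentially Schmidt's theorem on heights of rational subspaces of $\QQ^d$), and can alternatively be obtained by a direct Hermite normal form enumeration: each sublattice has a unique HNF basis matrix whose positive pivots multiply to one Plücker minor of absolute value at most $D$, and whose remaining entries are jointly constrained by the requirement that the sum of squares of all $t' \times t'$ minors is at most $D^2$. Substituting $D = c_d n^{t'-t}/\alpha$ and summing over $t' \in \{0, \ldots, t\}$ (using $n^{t'-t} \le 1$) then yields $O_d(\alpha^{-d})$, as required.

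The main obstacle is the sharp lattice-counting estimate in the last step. A naive Minkowski-reduction argument — using $\lambda_i(L) \ge 1$ (since $L \subseteq \ZZ^d$) together with Minkowski's second theorem $\prod_i \lambda_i \le c_t \Delta$, so that $L$ admits a basis inside an $\ell_\infty$-box of radius $O_{d,t}(\Delta)$ — would yield only $O_d(\Delta^{dt})$ candidate bases, which is too weak for $t \ge 2$. Obtaining the improvement to $O_d(\Delta^d)$ requires exploiting the joint constraint from \emph{all} $\binom{d}{t'}$ Plücker minors of a basis matrix, which is precisely the content of Schmidt's theorem (or equivalently the HNF enumeration above).
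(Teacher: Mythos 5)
Your overall strategy is the same as the paper's (associate to each admissible set $F$ a sublattice of $\ZZ^d$, bound its determinant by a volume/packing argument, then invoke Schmidt's theorem on the number of sublattices of $\ZZ^d$ of determinant at most $M$), but the specific lattice you chose breaks the middle step. Setting aside the typo that $(F-x)\cap\ZZ^d$ is literally the finite set $F-x$, you clearly intend $L:=(F'-x)\cap\ZZ^d$, the saturated lattice of integer vectors in the direction space of the ambient subspace $F'$ (your parenthetical about irrational directions only makes sense for this choice). For this lattice the claimed estimate $|(x+L)\cap[n]^d|\le c_d\,n^{t'}/\det L$ is false: it requires the successive minima of $L$ to be $O(n)$, and nothing forces that. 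Concretely, take $d=3$, $t=2$, $x=(1,1,1)$, and let $F'$ be the plane through $x$ with direction spanned by $(1,0,0)$ and $(0,N,1)$ for an integer $N\gg n$. Then $L$ has rank $t'=2$ and $\det L\asymp N$, yet $(x+L)\cap[n]^3=\{(a,1,1):1\le a\le n\}$ has $n$ points, which exceeds $c_d n^2/N$ as soon as $N\gg n$. Consequently your deduction $\det L\le c_d n^{t'-t}/\alpha$ fails, and admissible sets $F$ of this kind (they satisfy $|F|\ge\alpha n^t$ once $\alpha\le 1/n$, while $N$, hence $\det L$, can be arbitrarily large) escape your count of lattices of determinant at most $D$; the injection argument therefore does not cover all sets in \cref{eq:H-count}. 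A further, related defect is that $L$ depends on the choice of $F'$, which is not determined by $F$, and you give no argument that a representative with small determinant can always be chosen.

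The repair is exactly the paper's move: associate to $F$ the lattice $\Gamma$ generated by $F-x$ (or its saturation $\operatorname{span}_{\RR}(F-x)\cap\ZZ^d$). This lattice is canonical in $F$, one still has $F=(\Gamma+x)\cap[n]^d$ so the map $F\mapsto\Gamma$ is injective, and — crucially — $\Gamma$ is generated by vectors of $F-x\subseteq[-n,n]^d$, so its fundamental domain has diameter $O_d(n)$ and the packing argument legitimately gives $|F|\le|\Gamma\cap[-n,n]^d|=O_d(n^{t'}/\det\Gamma)$, whence $\det\Gamma=O_d(\alpha^{-1})$ (using $n^{t'-t}\le1$, as you do). From that point on your argument coincides with the paper's: Schmidt's theorem (or your Hermite-normal-form enumeration, which is a reasonable elementary substitute and correctly identifies why a naive Minkowski-reduction count of bases is too lossy) bounds the number of such lattices by $O_d(\alpha^{-d})$.
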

\begin{proof}
    Consider a set $F = F' \cap [n]^d$ satisfying the conditions in \cref{eq:H-count}, and let $\Gamma \subseteq \ZZ^d$ be the lattice generated by $F - x$.
    Note that $F \subseteq (\Gamma + x) \cap [n]^d \subseteq F' \cap [n]^d = F$, and hence $F = (\Gamma + x) \cap [n]^d$. In particular, $F$ is uniquely determined by $\Gamma$, and
    \begin{equation} \label{eq:Gamma-lower-bound}   
    |\Gamma \cap [-n, n]^d| \ge |F| \ge \alpha n^t.
    \end{equation}

    Denote the rank of $\Gamma$ by $t' \le t$, and let $v_1, \ldots, v_{t'} \in F - x \subseteq [-n, n]^d$ be some generating vectors of $\Gamma$. Also let $V = \operatorname{span}_{\RR}(v_1, \ldots, v_{t'})$, let $D = \big\{\sum_{i = 1}^{t'} c_i v_i : c_1, \ldots, c_{t'} \in [0, 1)\big\} \subseteq V$ be the fundamental domain of $\Gamma$, and let $\det \Gamma = \operatorname{vol}_{t'}(D)$ be the determinant\footnote{Recall that the determinant $\det \Gamma$ of a rank-$t'$ lattice $\Gamma$ is the $t'$-dimensional volume of its fundamental domain. Equivalently, $\det \Gamma = \sqrt{\det G}$, where $G = (\langle v_i, v_j \rangle)_{1 \le i, j \le t'}$ is the Gram matrix of some generating vectors $v_1, \ldots, v_{t'}$ of $\Gamma$.} of $\Gamma$. Then, for every two distinct points $y_1, y_2 \in \Gamma \cap [-n, n]^d$, the translates $y_1 + D$ and $y_2 + D$ are disjoint and contained in $[-(t'+1)n, (t'+1)n]^d \cap V$. Hence, $\bigsqcup_{y \in \Gamma \cap [-n, n]^d} (y + D)$ is contained in the $t'$-dimensional Euclidean ball in $V$ of radius $\sqrt{d} (t'+1)n$ centred at the origin. Therefore, as $t' \le t \le d$,
    \[
    |\Gamma \cap [-n, n]^d| \le \frac{\operatorname{vol}_{t'}(B(0, \sqrt{d}(t'+1)n))}{\operatorname{vol}_{t'}(D)} = O_d\left(\frac{n^{t}}{\det \Gamma}\right).
    \]
    Combining this with \cref{eq:Gamma-lower-bound}, we conclude that $\det \Gamma  = O_d(\alpha^{-1})$. The statement now follows from a result of Schmidt \cite[Theorem 2]{schmidt-68}, which states that for every $M \ge 0$, the number of lattices $\Gamma \subseteq \ZZ^d$ with $\det \Gamma \le M$ is $O_d(M^d)$.
\end{proof}

\section{Proof of \cref{higher-dimensions}} \label{sec:higher-dimensions}

It might be possible to prove the bound $f_{k, d, t} \ge (1-\eps)kn^{d-t}$ (for $k$ sufficiently large in terms of $\eps, d$) using some type of probabilistic deletion method, in the spirit of \cite{ghosal-goenka-keevash-25}. However, we take a different approach based on repeated applications of the Lov\'asz Local Lemma. This gives the stronger ``almost regularity'' property in the statement of \cref{higher-dimensions} (that every axis-aligned subspace of dimension $t$ has about $k$ points), and serves as a warm-up for the proof of \cref{relaxed-version}.

Our proof of \cref{higher-dimensions} does not rely heavily on the specific structure of affine subspaces intersecting the integer grid. The following definition captures the property of main importance to us.

\begin{definition}[polynomial tails] \label{def:polynomial-tails}
    We say that a hypergraph $\mc{H}$ (of mixed uniformity) \emph{has polynomial tails (with parameters $N, C \in \NN$)}, if every edge of $\mc{H}$ contains at most $N$ vertices, and for every vertex $v \in V(\mc{H})$ and $\alpha \in (0, 1]$, 
    \[
    |\{F \in E(\mc{H}) : v \in F, \; w(F) \ge \alpha\}| \le C\alpha^{-C},
    \] 
    where $w(F) := |F|/N$ is the \emph{weight} of an edge $F$.
\end{definition}

\begin{definition} \label{def:H_{n,d,t}}
    Let $n, d, t$ be integers such that $1 \le t \le d-1$, and let $\mc{F}_{d, t}$ denote the set of $t$-dimensional affine subspaces of $\RR^d$.
    Define $\mc{H}_{n, d, t}$ as the hypergraph on the vertex set $[n]^d$ with 
    \[
    E(\mc{H}_{n, d, t}) = \{ F \subseteq [n]^d : F = F' \cap [n]^d \text{ for some } F' \in \mc{F}_{d, t}\}.
    \]
    Since every $t$-dimensional affine subspace of $\RR^d$ intersects $[n]^d$ in at most $n^t$ points, \cref{polynomial-tails-subspaces} immediately implies that $\mc{H}_{n, d, t}$ has polynomial tails with $N = n^t$ and some $C = C(d)$.
\end{definition}

\begin{definition}[good set] \label{def:good} 
    Let $\mc{H}$ be a hypergraph having polynomial tails (with parameters $N$ and $C$). For a real number $m \in [1, N]$, we say that a set $S \subseteq V(\mc{H})$ is \emph{$m$-good} if it satisfies the following conditions:
    \begin{enumerate}
        \item (concentration for ``heavy'' edges) For every edge $F \in E(\mc{H})$ with $w(F) \in (m^{-2/3}, 1]$, we have
        \[
        |S \cap F| \in (1 \pm m^{-1/12}) m \cdot w(F).
        \]
        \item (upper bound for ``medium'' edges) For every edge $F \in E(\mc{H})$ with $w(F) \in (m^{-2}, m^{-2/3}]$, we have
        \[
        |S \cap F| \le (1 + m^{-1/12}) m^{1/3}.
        \]
        \item (very few vertices in ``light'' edges) For every edge $F \in E(\mc{H})$ with $w(F) \le m^{-2}$, we have
        \[
        |S \cap F| \le 6C + 3.
        \]
    \end{enumerate}
\end{definition}

\begin{theorem} \label{good-sets-in-hypergraphs}
    Let $\mc{H}$ be a hypergraph having polynomial tails (with parameters $N$ and $C$). Then there is $K = K(C)$ such that for every $m$ with $K \le m \le N$, there exists an $m$-good subset of $V(\mc{H})$.
\end{theorem}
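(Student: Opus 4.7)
The plan is to construct $S$ via an iterative halving procedure combined with \cref{local-lemma}. Set $T := \lceil \log_2(N/m)\rceil$, let $S_0 := V(\mc{H})$, and for each $i = 1,\dots,T$ form $S_i$ from $S_{i-1}$ by keeping each vertex independently with probability $1/2$; the output is $S := S_T$, whose effective density (relative to $N$) is $\approx m/N$. The central invariant is weight-dependent: call an edge $F$ \emph{active at step $i$} if $2^{-i}|F| \ge (1-m^{-1/12})m^{1/3}$, and maintain that $|S_i\cap F|\in(1\pm\eta_i)\cdot 2^{-i}|F|$ for every such $F$, where $\eta_0=0$ and $\eta_T\le m^{-1/12}$. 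When an edge $F$ drops out of the active range at some step $i^\star_F$, the invariant one step earlier gives $|S_{i^\star_F-1}\cap F|\le(1+m^{-1/12})m^{1/3}$, and monotonicity of subsampling preserves this upper bound for all larger $j$. This directly yields conditions~(1) and~(2) of \cref{def:good}. For condition~(3) (very light edges with $w(F)\le m^{-2}$), the expected intersection with $S$ is at most $m^{-1}$, and a direct tail computation --- for instance, a separate LLL application with bad event ``more than $6C+3$ vertices of $F$ survive,'' whose probability is at most $m^{-\Omega(C)}$ --- handles these.

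The key step is propagating the invariant from step $i$ to step $i+1$. Conditional on $S_i$, I apply \cref{local-lemma} to the fresh Bernoulli subsampling variables. For each edge $F$ active at step $i+1$, let $\mc{B}_F$ be the event that $|S_{i+1}\cap F|$ deviates beyond the allowed factor from its conditional mean $|S_i\cap F|/2$. Since $2^{-(i+1)}|F|\ge m^{1/3}/2$ for any such $F$, \cref{Chernoff} yields
\[
\Pr[\mc{B}_F]\le 2\exp\!\big(-c\,(\eta_{i+1}-\eta_i)^2\,m^{1/3}\big).
\]
Two bad events $\mc{B}_F$ and $\mc{B}_{F'}$ are dependent only if $F\cap F'$ contains a vertex of $S_i$. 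By the polynomial-tails bound applied at the weight threshold $\alpha_i:=2^{i+1}m^{1/3}/N$, each vertex lies in at most $C\alpha_i^{-C}$ edges that are active at step $i+1$, whence $\mc{B}_F$ is dependent on at most $|S_i\cap F|\cdot C\alpha_i^{-C}$ other bad events.

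The principal obstacle is verifying the LLL condition uniformly across all steps. Both $\Pr[\mc{B}_F]$ and the dependency degree depend delicately on $|F|$: heavier edges have sharper Chernoff tails but many more neighbours, and the ``just barely active'' edges sit exactly at the boundary where the two competing effects are both maximal. I expect to handle this by (i) choosing the per-step error increments $\eta_{i+1}-\eta_i$ so that $\sum_i(\eta_{i+1}-\eta_i)=\eta_T\le m^{-1/12}$ while each individual increment is large enough that Chernoff beats the local dependency degree; (ii) for each active $F$, grouping its dependent bad events dyadically by the weight of the other edge --- the polynomial decay of the count against the exponential Chernoff decay in probability multiplies to a geometric sum dominated by the boundary term; and (iii) for the very heaviest edges (with $|F|$ comparable to $N$) noting that their Chernoff bound alone is strong enough to absorb the entire dependency count, so no LLL manoeuvring is required there. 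Making these allocations satisfy $4q\Delta\le 1$ (or an asymmetric analogue of \cref{local-lemma}) at every step, with constants depending only on $C$ and not on $N$, is where the real technical work lies.
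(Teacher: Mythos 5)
Your plan captures the paper's core mechanism (iterative subsampling with an application of \cref{local-lemma} at each stage, tracking tight control on heavy edges and upper bounds on lighter ones), but the halving schedule with a \emph{fixed} activity threshold $m^{1/3}$ cannot satisfy the Local Lemma condition at the early steps once $N$ is much larger than $m$, and this is a genuine gap rather than a technicality you can engineer around.

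Concretely, at step $i+1$ the dependency degree of $\mc{B}_F$ is governed by the number of active edges through a vertex, which by polynomial tails at the threshold $\alpha_{i+1}\approx 2^{i+1}m^{1/3}/N = m^{1/3}/\mu_{i+1}$ (with $\mu_{i+1}:=2^{-(i+1)}N$) is of order $C(\mu_{i+1}/m^{1/3})^{C}$; multiplied by $|S_i\cap F|\lesssim\mu_{i+1}$, the degree is of order $C(\mu_{i+1}/m^{1/3})^{C+1}$. At the early steps this is polynomial in $N$. Your worst Chernoff tail, for the lightest active edges with $|S_{i+1}\cap F|\approx m^{1/3}$, is $\exp(-c(\eta_{i+1}-\eta_i)^2 m^{1/3})$, a quantity independent of $N$. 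So $4q\Delta\le 1$ forces $\eta_{i+1}-\eta_i\gtrsim\sqrt{C\log(\mu_{i+1})/m^{1/3}}$, and for $\mu_{i+1}$ close to $N$ this single increment already overshoots the entire budget $m^{-1/12}$ once $N$ is super-exponential in a power of $m$. Your mitigations (dyadic grouping of the dependent events, special-casing the heaviest edges, an asymmetric LLL) do not change this: the dyadic sum over the weights of dependent edges is dominated by the light end, where both the tail bound and the edge count are worst, so the condition is still $\eta_{i+1}-\eta_i\gtrsim\sqrt{C\log\mu_{i+1}/m^{1/3}}$. A separate, fixed-threshold LLL for condition~(3) has the same defect: the number of light edges through a vertex is polynomial in $N$ while the per-edge probability is $m^{-\Omega(C)}$, constant in $N$.

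The paper avoids this by \emph{cubing} the scale ($m_0=m^3\to m$) and, crucially, by defining the heavy/medium/light thresholds \emph{relative to the current level}: at the step from $m_0$ to $m$, only edges with $w(F)>m_0^{-2}$ are tracked, so the dependency degree is $\le 2Cm_0^{2C+1}=2Cm^{6C+3}$, polynomial in $m$ alone regardless of $N$. At the same time the tolerance $m^{-1/12}$ at level $m$ is a full factor $m^{1/6}$ looser than the tolerance $m_0^{-1/12}=m^{-1/4}$ at level $m_0$, so each step has an $\Theta(m^{-1/12})$ slack to spend --- far more than the $\Theta(\sqrt{\log m}\cdot m^{-1/6})$ that Chernoff needs to beat a $\mathrm{poly}(m)$ dependency. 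The analogous repair in your setting would be to make the active threshold move with $i$ (e.g.\ $2^{-i}|F|\gtrsim\mu_i^{1/3}$ rather than $\gtrsim m^{1/3}$), with the light-edge bookkeeping also made level-dependent --- at which point you essentially recover the paper's scheme, just with more steps and a fine telescope in place of the paper's coarse one. A smaller point: when an edge first drops out of the active range, the invariant one step earlier gives $|S_{i^\star_F-1}\cap F|\lesssim 2m^{1/3}$, not $(1+m^{-1/12})m^{1/3}$, since a single halving can nearly double the bound; this factor of $2$ would need to be absorbed elsewhere.
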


\begin{proof}[\textbf{Proof of \cref{higher-dimensions} assuming \cref{good-sets-in-hypergraphs}}]
    Recall from \cref{def:H_{n,d,t}} that the hypergraph $\mc{H}_{n, d, t}$ has polynomial tails with parameters $N = n^t$ and some $C = C(d)$. We may assume that $\eps < 1/2$, and that $k$ is sufficiently large in terms of $\eps, d, C$. Then, by \cref{good-sets-in-hypergraphs}, there exists an $(1-\eps/2)k$-good subset of $S \subseteq [n]^d$.

    We claim that $S$ has the desired properties. Indeed, if $F'$ is a $t$-dimensional affine subspace, then $F = F' \cap [n]^d$ is an edge of $\mc{H}_{n, d, t}$. Thus, by conditions (1), (2), (3) from \cref{def:good},
    \[
    |S \cap F| \le \max\left((1 + ((1-\eps/2)k)^{-1/12})(1-\eps/2)k, 6C + 3\right) \le k.
    \]
    For the lower bound, we note that if $F'$ is an axis-aligned $t$-dimensional affine subspace intersecting $[n]^d$, then the corresponding edge $F = F' \cap [n]^d$ of $\mc{H}_{n, d, t}$ has weight $1$. Therefore, by condition (1) from \cref{def:good},
    \[
    |S \cap F| \ge (1 - ((1-\eps/2)k)^{-1/12})(1-\eps/2)k \ge (1-\eps) k.
    \]
    This completes the proof.
\end{proof}

To prove \cref{good-sets-in-hypergraphs}, we consider a sequence of real numbers $m_{(1)} < m_{(2)} < \ldots < m_{(r)}$ defined as
\begin{equation} \label{eq:m_i-higher-dimensions}
m_{(1)} = m, \quad m_{(i+1)} = m_{(i)}^3, \quad N^{1/3} < m_{(r)} \le N.
\end{equation}
Then, we prove the existence of an $m_{(i)}$-good set for each $i = r, r-1, \ldots, 1$ via a downward induction, applying the Lov\'asz Local Lemma (\cref{local-lemma}(1)) at each step. This way, \cref{good-sets-in-hypergraphs} easily follows from \cref{one-iteration-hypergraphs} below.

\begin{lemma} \label{one-iteration-hypergraphs}
    Let $\mc{H}$ be a hypergraph having polynomial tails (with parameters $N$ and $C$). Let $m_0, m$ be real numbers such that $K \le m \le m_0 \le N$ for some $K = K(C)$. Suppose that either 
    \begin{enumerate}
        \item [(a)] (induction step) $m_0 = m^3$ and $S_0$ is an $m_0$-good subset of $V(\mc{H})$, or
        \item [(b)] (base case) $m_0 = N \le m^3$ and $S_0 = V(\mc{H})$ (which is trivially $m_0$-good).
    \end{enumerate}
    Then there exists an $m$-good set $S \subseteq S_0$.    
\end{lemma}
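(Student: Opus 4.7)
My plan is to obtain $S$ from $S_0$ by independently keeping each vertex with probability $p := m/m_0$, and to invoke \cref{local-lemma}(1) to show that with positive probability the resulting $S$ is $m$-good. The choice $p = m/m_0$ ensures that $\EE[|S \cap F|] = p \cdot |S_0 \cap F|$ is close to the target $m \cdot w(F)$ for every edge on which $|S_0 \cap F|$ is close to $m_0 \cdot w(F)$ (guaranteed by condition (1) of $m_0$-goodness).

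For each edge $F \in E(\mc{H})$ with $w(F) > m_0^{-2}$, I set up a bad event $\mc{E}_F$ asserting the failure of the appropriate $m$-goodness condition for $F$: the two-sided deviation from $(1 \pm m^{-1/12}) m w(F)$ for $w(F) > m^{-2/3}$, the upper tail $|S \cap F| > (1 + m^{-1/12}) m^{1/3}$ for $w(F) \in (m^{-2}, m^{-2/3}]$, and $|S \cap F| > 6C + 3$ for $w(F) \in (m_0^{-2}, m^{-2}]$. Edges with $w(F) \le m_0^{-2}$ require no bad event, since $|S \cap F| \le |S_0 \cap F| \le 6C + 3$ already follows from condition (3) of $m_0$-goodness (and trivially in case (b), where $|F| = N w(F) < 1$). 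Using conditions (1) and (2) of $m_0$-goodness to control $\mu := \EE[|S \cap F|]$, I get $\mu \le 2 m w(F)$ in the heavy and medium ranges, while for light edges with $w(F) \in (m_0^{-2}, m^{-2}]$ one finds $\mu \le 2/m$ in both cases (a) (via $|S_0 \cap F| \le 2 m_0^{1/3} \le 2 m$ and $p = m^{-2}$) and (b) (via $|S_0 \cap F| = N w(F) \le N m^{-2}$ and $p = m/N$). Chernoff-type bounds then give $\Pr[\mc{E}_F] \le 2\exp(-c m^{1/6})$ for heavy and medium edges, and $\Pr[\mc{E}_F] \le (eC'\mu/(6C+4))^{6C+4} \le (C'/m)^{6C+4}$ for light edges. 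The concentration slack $m^{-1/12} - m_0^{-1/12}$ used in the heavy case is at least $\tfrac{1}{2} m^{-1/12}$ because $m_0 \ge m^3$ in case (a), and similarly in case (b) since $m \ge N^{1/3}$ forces $m_0^{-1/12} \le m^{-1/4}$.

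For dependencies, $\mc{E}_F$ depends only on $\{\xi_v : v \in S_0 \cap F\}$, so it is mutually independent of every $\mc{E}_{F'}$ with $S_0 \cap F \cap F' = \emptyset$. The polynomial-tails assumption (applied with $\alpha = m_0^{-2}$) bounds the number of bad-event edges through any given vertex by $C m_0^{2C} \le C m^{6C}$, and $|S_0 \cap F| \le 2 m_0 w(F) \le 2 m^3$ for every edge carrying a bad event, yielding dependency degree $\Delta_F \le 2 C m^{6C+3}$. The largest bad-event probability is dominated by the light bound $(C'/m)^{6C+4}$ (the heavy and medium bounds decay super-polynomially in $m$), so $4 q \Delta = O(1/m) \le 1$ for $m \ge K(C)$; \cref{local-lemma}(1) then produces the desired $m$-good set $S \subseteq S_0$.

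The main technical obstacle I anticipate is the Chernoff estimate for medium-for-$m$ edges: the threshold $T := (1 + m^{-1/12}) m^{1/3}$ can be close to or far from $\mu \le 2 m w(F)$, so I expect to split the analysis into $\mu \ge m^{1/3}/2$ (applying quadratic multiplicative Chernoff with relative deviation $\delta \ge m^{-1/12}/2$) and $\mu < m^{1/3}/2$ (applying linear Chernoff with additive gap $T - \mu \ge m^{1/3}/2$). A further edge case where $m_0/m$ is close to $1$ (leaving no Chernoff slack) should be handled separately, for instance by taking $p = 1$ when $m_0 = m$, or by a direct verification that $S_0$ is already $m$-good whenever $m_0$ and $m$ differ by at most a constant factor.
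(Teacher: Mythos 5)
Your proposal is correct and essentially reproduces the paper's argument: subsample $S_0$ with probability $p = m/m_0$, set up one bad event per edge of weight exceeding $m_0^{-2}$, bound their probabilities via Chernoff (heavy/medium) and a binomial tail (light), and apply \cref{local-lemma}(1) with the polynomial-tails assumption controlling the dependency degree. Both technical obstacles you flag are in fact non-issues: the paper handles medium edges by stochastically dominating $|S\cap F|$ by a sum of $m' := \lfloor (1+m^{-1/4}) m_0 m^{-2/3}\rfloor$ independent $\Ber(p)$ variables and applying a single Chernoff bound (avoiding your case split), and the $m_0/m \approx 1$ scenario only arises in case (b), where $|S_0\cap F| = m_0 w(F)$ holds exactly and the Chernoff bound does not degrade as $p \to 1$.
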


\begin{proof}
    Let $\delta := m^{-1/12}$. We will assume that $m$ is sufficiently large in terms of $C$ (and, in particular, that $\delta \le 0.01$).
    We claim that for an edge $F \in E(\mc{H})$,
    \begin{equation} \label{eq:heavy-medium-edges}
    |S_0 \cap F| \in (1 \pm \delta^3) m_0 \cdot w(F) \quad \text{ if }  w(F) \in (m^{-2}, 1];
    \end{equation}
    \begin{equation} \label{eq:light-edges}
    |S_0 \cap F| \le (1 + \delta^3) m_0 \cdot m^{-2} \quad \text{ if } w(F) \in (m_0^{-2}, m^{-2}].
    \end{equation}
    Indeed, in case (a), since $m_0 = m^3$ and $\delta^3 = m_0^{-1/12}$, this follows from conditions (1) and (2) from the definition of an $m_0$-good set. On the other hand, in case (b) we have $m_0 = N$ and $S_0 = V(\mc{H})$, and hence $|S_0 \cap F| = m_0 \cdot w(F)$ for every edge $F \in E(\mc{H})$ by the definition of weight.

    Let $p := m/m_0$, and let $S$ be a random subset of $S_0$ obtained by including each vertex independently with probability $p$. We will show that $S$ is $m$-good with positive probability, through an application of the Lov\'asz Local Lemma (\cref{local-lemma}(1)). First, we identify the bad events and estimate their probabilities.

    \textbf{Heavy edges.} For an edge $F \in E(\mc{H})$ with $w(F) \in (m^{-2/3}, 1]$, let $\mc{E}_1(F)$ denote the bad event that $S$ does not satisfy condition (1) for this edge $F$: that is, $|S \cap F| \notin (1 \pm \delta) m \cdot w(F)$. By \cref{eq:heavy-medium-edges}, for such edges $F$ we have $|S_0 \cap F| \in (1 \pm \delta^3) m_0 \cdot w(F)$. Thus, if $\mc{E}_1(F)$ occurs, then
    \[
    |S \cap F| \notin (1 \pm \delta/2) p |S_0 \cap F|.
    \]
    Therefore, by the Chernoff bound (\cref{Chernoff}),
    \[
    \prob{\mc{E}_1(F)} \le 2\exp\left(-\frac{(\delta/2)^2 p |S_0 \cap F|}{3}\right) \le 2\exp\left(-\frac{(\delta/2)^2 (1-\delta^3) m \cdot w(F)}{3}\right) \le 2\exp\left(-m^{1/6}/13\right).
    \]

    \textbf{Medium edges.} For an edge $F \in E(\mc{H})$ with $w(F) \in (m^{-2}, m^{-2/3}]$, let $\mc{E}_2(F)$ denote the bad event that $S$ does not satisfy condition (2) for this edge $F$. By \cref{eq:heavy-medium-edges}, for such edges $F$ we have
    \[
    |S_0 \cap F| \le m' := \lfloor (1 + \delta^3) m_0 \cdot m^{-2/3}\rfloor.
    \]
    Let $X$ be a sum of $m'$ independent $\Ber(p)$ random variables. Then, by the Chernoff bound (\cref{Chernoff}),
    \begin{align*}
    \prob{\mc{E}_2(F)} &= \prob{|S \cap F| > (1+\delta) m^{1/3}} \le \prob{X > (1+\delta) m^{1/3}} \le \prob{X \notin (1 \pm \delta/2) p m'} \\
    &\le 2\exp\left(-\frac{(\delta/2)^2 p m'}{3}\right) \le 2\exp\left(-\frac{(\delta/2)^2 (m^{1/3}-1)}{3}\right) \le 2\exp\left(-m^{1/6}/13\right).
    \end{align*}

    \textbf{Light edges.} Note that for edges $F$ with $w(F) \le m_0^{-2}$, condition (3) for $S$ is implied by condition (3) for $S_0$, so it suffices to consider only edges with $w(F) \in (m_0^{-2}, m^{-2}]$. Let $\mc{E}_3(F)$ denote the bad event that $S$ does not satisfy condition (3) for such an edge $F$. By \cref{eq:light-edges}, we have $|S_0 \cap F| \le (1 + \delta^3) m_0 m^{-2}$.
    Therefore, denoting $C_1 := 6C + 4$,
    \[
    \prob{\mc{E}_3(F)} = \prob{|S \cap F| \ge C_1} \le \binom{|S_0 \cap F|}{C_1} \cdot p^{C_1} \le \left((1+\delta^3) m_0 m^{-2} \cdot m/m_0\right)^{C_1} \le \left(2/m\right)^{C_1}.
    \]
    Since $m$ is sufficiently large in terms of $C$, the maximum probability $q$ of a bad event then satisfies
    \[
    q \le \max\left(2\exp(m^{-1/6} / 13), \; (2/m)^{6C + 4}\right) = (2/m)^{6C + 4}.
    \]

    \textbf{Degree of the dependency graph.} So, we have exactly one bad event ($\mc{E}_1$, $\mc{E}_2$, or $\mc{E}_3$) for each edge $F \in E(\mc{H})$ with $w(F) \in (m_0^{-2}, 1]$. Note that each of these events depends only on the Bernoulli random variables associated with the underlying vertices of $S_0 \cap F$, and the number of such vertices satisfies $|S_0 \cap F| \le (1+\delta^3) m_0$ by \cref{eq:heavy-medium-edges} and \cref{eq:light-edges}.

    Recall that $\mc{H}$ has polynomial tails (with parameters $N$ and $C$). Therefore, for every vertex $v \in V(\mc{H})$ we can bound the number of edges with large weight containing $v$ as follows:
    \[
    \left|\{F \in E(\mc{H}) : v \in F, \; w(F) \ge m_0^{-2}\}\right| \le C m_0^{2C}.
    \]
    As $m_0 \le m^3$, the maximum degree $\Delta$ of the dependency graph of our bad events then satisfies
    \[
    \Delta \le (1 + \delta^3) m_0 \cdot C m_0^{2C} \le 2C m^{6C+3}, \quad \text{ and thus } \quad 4q\Delta \le C 2^{6C+7} / m.
    \]
    Since $m$ is sufficiently large in terms of $C$, \cref{local-lemma}(1) completes the proof.
\end{proof}

\begin{proof}[\textbf{Proof of \cref{good-sets-in-hypergraphs}}]
    Consider a sequence of real numbers $m = m_{(1)} < m_{(2)} < \ldots < m_{(r)}$ satisfying \cref{eq:m_i-higher-dimensions}. By \cref{one-iteration-hypergraphs}(b), there exists an $m_{(r)}$-good subset of $V(\mc{H})$. Iteratively applying \cref{one-iteration-hypergraphs}(a), we conclude that there exists an $m_{(1)}$-good subset of $V(\mc{H})$, as required. 
\end{proof}

\section{Proof of \cref{relaxed-version}} \label{sec:relaxed-version}

In this section, our goal is to prove \cref{relaxed-version}. Here we do not use the language of hypergraphs with polynomial tails: though some parts of our argument can be translated to that setting, other parts substantially rely on the fact that the grid $[n]^2$ can be interpreted as the complete bipartite graph $K_{n, n}$. \cref{def:nice} below provides a suitable analogue of good sets from \cref{def:good}.

Recall that $\mc{L}$ denotes the set of lines in $\RR^2$ intersecting the grid $[n]^2$ in at least two points, and that for every line $L \in \mc{L}$, 
\[
w(L) = \frac{|[n]^2 \cap L|}{n}.
\]

\begin{definition}[nice set] \label{def:nice}
    For a real number $m \in [1, n]$, we say that a set $S \subseteq [n]^2$ is \emph{$m$-nice} if it satisfies the following conditions:
    \begin{enumerate}
        \item (concentration for ``heavy'' lines) For every line $L \in \mc{L}$ with $w(L) \in (m^{-2/3}, 1]$, we have 
        \[
        |S \cap L| \in (1 \pm m^{-1/12}) m \cdot w(L).
        \]
        \item (upper bound for ``medium'' lines) For every line $L \in \mc{L}$ with $w(L) \in (m^{-2}, m^{-2/3}]$, we have 
        \[
        |S \cap L| \le (1 + m^{-1/12})m^{1/3}.
        \]
        \item (very few points on ``light'' lines) For every line $L \in \mc{L}$ with $w(L) \le m^{-2}$, we have 
        \[
        |S \cap L| \le C := 14.
        \]
        \item (quasirandomness) For every pair of subsets $I, J \subseteq [n]$ of size at least $n/10$, we have
        \[
        |S \cap (I \times J)| \in (1 \pm m^{-1/12}) |I| |J| \cdot m/n.
        \]
    \end{enumerate}
\end{definition}

For the rest of this section, we fix $K := 10^{36}$ and $\eps := 0.005$.

Conditions (1), (2), (3) from \cref{def:nice} imply that, for $k \ge K$, a $(1+\eps)k$-nice set contains at most $k \cdot (w(L) + 0.01)$ points on each line $L \in \mc{L}$. To prove \cref{relaxed-version}, we will take such a set and ``regularise'' it: that is, we will find a subset containing exactly $k$ points on each horizontal and each vertical line. 

This regularisation step is the reason we need the quasirandomness condition in \cref{def:nice}. However, this quasirandomness condition is not enough on its own, and we need a further idea of Jain and Pham \cite{jain-pham-24}. Indeed, instead of constructing a single nice set, we construct a sufficiently ``spread'' probability distribution supported on nice sets, and prove that a sample from this distribution can be regularised with positive probability.

Formally, we say that a probability distribution on subsets of $[n]^2$ is \emph{$p$-spread} if, for every subset $T \subseteq [n]^2$ and a sample $S$ from this distribution,
\[
\prob{T \subseteq S} \le p^{|T|}.
\]

Our proof of \cref{relaxed-version} follows the same general strategy as the proof of \cref{higher-dimensions}. Namely, given $n \ge k \ge K$, we consider a sequence of real numbers $m_{(1)} < m_{(2)} < \ldots < m_{(r)}$ such that
\begin{equation} \label{eq:m_i-plane}
m_{(1)} = (1 + \eps)k, \quad m_{(i+1)} = m_{(i)}^3, \quad n^{1/3} < m_{(r)} \le n,
\end{equation}
and find $(m_{(i)}+1)/n$-spread distributions supported on $m_{(i)}$-nice sets for $i = r, r-1, \ldots, 1$ via a downward induction.
We handle the base case and the induction step of this procedure in \cref{induction-step}(b) and \cref{induction-step}(a), respectively. Then, in \cref{regularisation}, we perform the final regularisation step: specifically, we show that a sample from the $(m_{(1)}+1)/n$-spread distribution supported on $m_{(1)}$-nice sets has a subset containing exactly $k$ points on each horizontal and each vertical line with positive probability.

\begin{lemma} \label{induction-step}
    Let $n$ be an integer and $m_0, m$ be real numbers such that $K \le m \le m_0 \le n$. Suppose that either 
    \begin{enumerate}
        \item [(a)] $m_0 = m^3$ and there exists an $(m_0+1)/n$-spread distribution supported on $m_0$-nice sets, or
        \item [(b)] $m_0 = n \le m^3$ (in this case, we have the trivial  $1$-spread distribution supported on the $m_0$-nice set $[n]^2$).
    \end{enumerate}
    Then there exists an $(m+1)/n$-spread distribution supported on $m$-nice sets.
\end{lemma}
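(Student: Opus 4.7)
The plan is to mimic the proof of \cref{one-iteration-hypergraphs} while additionally tracking a spread distribution. I would sample $S_0$ from the given distribution (in case (a)) or set $S_0=[n]^2$ (in case (b)), and then independently include each vertex of $S_0$ in $S$ with probability $p:=m/m_0$. Before any conditioning, this gives a $(pp_0)$-spread distribution on $S$, where $p_0\le(m_0+1)/n$; note that $pp_0\le(m+m/m_0)/n$ is smaller than $(m+1)/n$ by a multiplicative factor $1-\Theta(1/m)$, and this slack is what subsequent conditioning will spend. I would introduce bad events $\mc{E}_1(L),\mc{E}_2(L),\mc{E}_3(L)$ for each line $L$ with $w(L)\in(m^{-6},1]$, analogous to the hypergraph argument; lines $L$ with $w(L)\le m^{-6}$ satisfy condition~(3) of \cref{def:nice} automatically by monotonicity from the $m_0$-nice $S_0$. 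Conditions~(1) and~(2) of the $m_0$-niceness of $S_0$ supply the replacements for \cref{eq:heavy-medium-edges} and \cref{eq:light-edges}, so \cref{Chernoff} combined with \cref{polynomial-tails-lines} produces the usual LLL hypotheses. Then \cref{local-lemma}(1) shows that the event $\mc{G}_1$ that no line bad event occurs satisfies $\prob{\mc{G}_1\mid S_0}>0$ for every $m_0$-nice $S_0$.

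The new difficulty is the quasirandomness condition~(4). Each potential bad event $\mc{E}_4(I,J)$ depends on $\Omega(n^2)$ Bernoulli variables, so including these in \cref{local-lemma}(1) would force exponential-in-$n$ dependency degrees. Instead, I would handle~(4) separately. For each pair $(I,J)$ with $|I|,|J|\ge n/10$, the $m_0$-quasirandomness of $S_0$ makes the conditional expectation of $|S\cap(I\times J)|$ a $(1\pm m_0^{-1/12})$-factor of $|I||J|m/n\ge mn/100$, so \cref{Chernoff} yields $\prob{\mc{E}_4(I,J)\mid S_0}\le 2\exp(-\Omega(m^{5/6}n))$. Applying \cref{local-lemma}(2) conditionally on $\mc{G}_1$ costs a multiplicative factor $\exp(6qM_{I,J})$; here a single pair $(I,J)$ intersects at most $O(nm^{15})$ line-event dependency sets of probability $O(m^{-15})$, so $qM_{I,J}=O(n)$, which is dwarfed by $\exp(-\Omega(m^{5/6}n))$ for $m\ge K$. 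A union bound over the at most $4^n$ choices of $(I,J)$ then yields $\prob{\overline{\mc{G}_2}\mid\mc{G}_1}\le\eta:=\exp(-\Omega(m^{5/6}n))$, where $\mc{G}_2$ denotes the event that condition~(4) holds.

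Let $\mu$ be the distribution of $S$ conditioned on $\mc{G}_1\cap\mc{G}_2$; by construction $\mu$ is supported on $m$-nice sets. For the spread bound I would apply \cref{local-lemma}(2) to the event $\{T\subseteq S\}$ conditionally on $S_0$: this event depends only on $\{Z_v:v\in T\cap S_0\}$, which intersects at most $|T|\cdot 18m^{12}$ line-event dependency sets by \cref{polynomial-tails-lines}, so the correction factor is $\exp(O(|T|/m^3))=(1+O(1/m^3))^{|T|}$. Integrating over $S_0\sim\mu_0$ and using the spread of $\mu_0$ gives
\[
\probs{\mu}{T\subseteq S}\le\frac{1}{1-\eta}(pp_0)^{|T|}\exp(O(|T|/m^3)).
\]
Since $pp_0/((m+1)/n)\le 1-\Theta(1/m)$ and $\eta$ is exponentially small, this is $\le((m+1)/n)^{|T|}$ for every $|T|\ge 1$ (the case $|T|=0$ is trivial). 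The hardest part will be precisely this balancing: condition~(4) cannot be incorporated symmetrically with the line events, so the split strategy (\cref{local-lemma}(1) for lines, then \cref{local-lemma}(2) plus union bound for quasirandomness) has to be calibrated so that the total conditioning cost does not exceed the $\Theta(1/m)$ slack between $pp_0$ and $(m+1)/n$.
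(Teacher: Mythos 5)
Your proposal is correct and follows essentially the same route as the paper's own proof: subsample $S_0$ with probability $p=m/m_0$, establish the line bad events via Chernoff and a combinatorial tail bound, apply the Lovász Local Lemma conditionally on $S_0$ (part (1) for existence, part (2) for both the quasirandomness failure probability and the spread estimate), and absorb the $\exp(O(|T|/m^3))$ and $1/(1-\eta)$ corrections into the $\Theta(1/m)$ multiplicative slack between $p\cdot(m_0+1)/n$ and $(m+1)/n$. All the key quantitative thresholds (the $q=O(m^{-15})$ bad-event probability, the $O(m^{15})$ dependency degree from \cref{polynomial-tails-lines}, the $O(n)$ bound on $qM$ for the quasirandomness events, and the final balancing) match the paper.
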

\begin{proof}
    Let $\delta := m^{-1/12} \le 0.001$. Let $S_0$ be a random $m_0$-nice set sampled from the ``original distribution'' (provided by (a) or (b)). We claim that for each line $L \in \mc{L}$,
    \begin{equation} \label{eq:heavy-medium-lines}
    |S_0 \cap L| \in (1 \pm \delta^3) m_0 \cdot w(L) \quad \text{ if }  w(L) \in (m^{-2}, 1];
    \end{equation}
    \begin{equation} \label{eq:light-lines}
    |S_0 \cap L| \le (1 + \delta^3) m_0 \cdot m^{-2} \quad \text{ if }  w(L) \in (m_0^{-2}, m^{-2}].
    \end{equation}
    Indeed, in case (a), since $m_0 = m^3$ and $\delta^3 = m_0^{-1/12}$, this follows from conditions (1) and (2) from the definition of an $m_0$-nice set. On the other hand, in case (b), we have $m_0 = n$ and $S_0 = [n]^2$, and hence $|S_0 \cap L| = m_0 \cdot w(L)$ for every line $L$ by the definition of weight.

    Let $p := m/m_0$, and let $S$ be a random subset of $S_0$ obtained by including each point with probability $p$ (independently of each other and of the randomness of $S_0$). To apply the Lov\'asz Local Lemma (\cref{local-lemma}), we first identify the bad events and estimate their probabilities (these calculations are almost identical to those in the proof of \cref{one-iteration-hypergraphs}).
    
    \textbf{Heavy lines.} For a line $L \in \mc{L}$ with $w(L) \in (m^{-2/3}, 1]$, let $\mc{E}_1(L)$ denote the bad event that $S$ does not satisfy condition (1) from \cref{def:nice} for this line $L$: that is, $|S \cap L| \notin (1 \pm \delta) m \cdot w(L)$. By \cref{eq:heavy-medium-lines}, for such lines $L$ we have
    $|S_0 \cap L| \in (1 \pm \delta^3) m_0 \cdot w(L)$. Thus, if $\mc{E}_1(L)$ occurs, then 
    \[
    |S \cap L| \notin (1 \pm \delta/2) p |S_0 \cap L|.
    \]
    Therefore, by the Chernoff bound (\cref{Chernoff}),
    \[
    \prob{\mc{E}_1(L)} \le 2\exp\left(-\frac{(\delta/2)^2 p |S_0 \cap L|}{3}\right) \le 2\exp\left(-\frac{(\delta/2)^2 (1-\delta^3) m \cdot w(L)}{3}\right) \le 2\exp\left(-m^{1/6}/13\right).
    \]

    \textbf{Medium lines.} For a line $L \in \mc{L}$ with $w(L) \in (m^{-2}, m^{-2/3}]$, let $\mc{E}_2(L)$ denote the bad event that $S$ does not satisfy condition (2) for this line $L$. By \cref{eq:heavy-medium-lines}, for such lines $L$ we have
    \[
    |S_0 \cap L| \le m' := \lfloor (1+\delta^3) m_0 \cdot m^{-2/3} \rfloor.
    \]
    Let $X$ be a sum of $m'$ independent $\Ber(p)$ random variables. Then, by the Chernoff bound (\cref{Chernoff}),
    \begin{align*}
    \prob{\mc{E}_2(L)} &= \prob{|S \cap L| > (1+\delta) m^{1/3}} \le \prob{X > (1+\delta) m^{1/3}} \le \prob{X \notin (1 \pm \delta/2) p m'} \\
    &\le 2\exp\left(-\frac{(\delta/2)^2 p m'}{3}\right) \le 2\exp\left(-\frac{(\delta/2)^2 (m^{1/3}-1)}{3}\right) \le 2\exp\left(-m^{1/6}/13\right).
    \end{align*}

    \textbf{Light lines.} Note that for lines with $w(L) \le m_0^{-2}$, condition (3) for $S$ is implied by condition (3) for $S_0$, so it suffices to consider only lines with $w(L) \in (m_0^{-2}, m^{-2}]$.
    Let $\mc{E}_3(L)$ denote the bad event that $S$ does not satisfy condition (3) for such line $L$. By \cref{eq:light-lines}, we have $|S_0 \cap L| \le (1 + \delta^3) m_0 m^{-2}$. Therefore, recalling that $C = 14$,
    \[
    \prob{\mc{E}_3(L)} = \prob{|S \cap L| \ge C+1} \le \binom{|S_0 \cap L|}{C+1} \cdot p^{C+1} \le \left(\frac{e \cdot (1+\delta^3)m_0 m^{-2}}{C+1} \cdot p\right)^{C+1} \le \left(\frac{1}{5m}\right)^{15}.
    \]
    (Here we used the standard estimate $\binom{n}{k} \le (en/k)^k$.)

    Since $m \ge K = 10^{36}$, the maximum probability $q$ of a bad event then satisfies 
    \[
    q \le \max(2\exp(-m^{1/6}/13), (1/(5m))^{15}) = (1/(5m))^{15}.
    \]
    
    \textbf{Passing to the conditional distribution.} So, we have exactly one bad event ($\mc{E}_1$, $\mc{E}_2$, or $\mc{E}_3$) for each line $L$ with weight in $(m_0^{-2}, 1]$. 
    Note that each of the events $\mc{E}_1(L)$, $\mc{E}_2(L)$, and $\mc{E}_3(L)$ depends only on the Bernoulli random variables associated with the underlying points of $S_0 \cap L$, and the number of such points satisfies $|S_0 \cap L| \le (1 + \delta^3)m_0$ by \cref{eq:heavy-medium-lines} and \cref{eq:light-lines}. Furthermore, by \cref{polynomial-tails-lines}, there are at most $D := 18m_0^4$ lines with weight larger than $m_0^{-2}$ passing through each point of the grid $[n]^2$. Therefore, since $m_0 \le m^3$, the maximum degree $\Delta$ of the dependency graph satisfies
    \[
    \Delta \le (1 + \delta^3)m_0 \cdot D \le 20 m_0^5 \le 20 m^{15}, \quad \text{ and thus } \quad q\Delta \le (1/(5m))^{15} \cdot 20 m^{15} < 1/4.
    \]
    Let $\mc{E}$ denote the event that none of the bad events occur. Then $\prob{\mc{E}} > 0$ by \cref{local-lemma}(1), and we can consider the conditional distribution of $S$ given $\mc E$.
    
    \textbf{Quasirandomness.} If $\mc{E}$ occurs, then $S$ clearly satisfies conditions (1), (2), and (3). Next, we check that, in the conditional probability space given $\mc E$, our random set $S$ also satisfies condition (4) with high probability (denote the event that condition (4) fails by $\mc{E}_4$).

    Since each of the $n$ horizontal lines contains at most $(1+\delta^3)m_0$ points of $S_0$, the total number of events $N$ in our application of the local lemma satisfies
    \[
    N \le n \cdot (1+\delta^3)m_0 \cdot D \le 20 m^{15} n, \quad \text{ and thus } \quad qN \le (1/(5m))^{15} \cdot 20 m^{15} n < n.
    \]
    In case (a), condition (4) for $S_0$ gives that 
    \[
    |S_0 \cap (I \times J)| \in (1 \pm \delta^3) |I| |J| \cdot m_0/n
    \] 
    for every pair of sets $I, J \subseteq [n]$ of size at least $n/10$. On the other hand, in case (b) we trivially have $|S_0 \cap (I \times J)| = |I||J|$. Either way, by the Chernoff bound (\cref{Chernoff}),
    \begin{align*}
    \mathbb{P}\big[|S \cap (I \times J)| &\notin (1 \pm \delta) |I| |J| \cdot m/n\big] \le \mathbb{P}\big[|S \cap (I \times J)| \notin (1 \pm \delta/2) p \cdot |S_0 \cap (I \times J)|\big] \\
    &\le 2\exp\left(-\frac{(\delta/2)^2 p |S_0 \cap (I \times J)|}{3}\right) \le 2\exp\left(-\frac{(\delta/2)^2 (1 - \delta^3) mn}{3 \cdot 10^2}\right) \le 2\exp\left(-\frac{m^{5/6}n}{1300}\right).
    \end{align*}
    Therefore, by the union bound and \cref{local-lemma}(2),
    \begin{equation}\begin{split} \label{eq:condition-4-fails}
    \prob{\mc{E}_4 \mid \mc{E}} &\le \prob{\mc{E}_4} \cdot \exp(6qN) \le 2^{2n} \cdot 2\exp(-m^{5/6} n/1300) \cdot \exp(6qN) \\
    &\le \exp(n \cdot (2 + 1 - (m^{5/6}/1300) + 6)) \le \exp(-n).
    \end{split}\end{equation}
    (Here we again used that $m \ge K = 10^{36}$.)
    
    \textbf{Spreadness.} We claim that the conditional distribution of $S$ given $\mc{E}$ and $\overline{\mc{E}_4}$ has the desired properties. It is supported on $m$-nice sets by construction, hence it only remains to check that it is $(m+1)/n$-spread.

    Consider a non-empty set $T \subseteq [n]^2$. In both cases (a) and (b), our original distribution is $((1+m^{-3})m_0/n)$-spread, and thus
    \[
    \prob{T \subseteq S} = p^{|T|} \cdot \prob{T \subseteq S_0} \le \big(p \cdot (1+m^{-3})m_0/n\big)^{|T|}.
    \]
    The event ``$T \subseteq S$'' depends on at most $D \cdot |T|$ bad events. Therefore, by \cref{local-lemma}(2),
    \begin{align*}
    \prob{T \subseteq S \mid \mc{E}} &\le \prob{T \subseteq S} \cdot \exp(6q D |T|) \le \big(p \cdot (1 + m^{-3})m_0/n \cdot \exp(6qD)\big)^{|T|} \\
    &\le \big((1+m^{-3}) \cdot \exp(m^{-3}/2) \cdot m/n\big)^{|T|} \le \big((1+m^{-3})^2 \cdot m/n\big)^{|T|}.
    \end{align*}
    Since $n \ge m$, combining this with \cref{eq:condition-4-fails} gives that
    \begin{align*}
    \prob{T \subseteq S \mid \overline{\mc{E}_4} \cap \mc{E}} &\le \frac{\prob{T \subseteq S \mid \mc{E}}}{1 - \prob{\mc{E}_4 \mid \mc{E}}} \le \frac{((1+m^{-3})^2\cdot m/n)^{|T|}}{1 - \exp(-n)} \\
    &\le \big((1 + 2 \exp(-m))\cdot(1+m^{-3})^2\cdot m/n\big)^{|T|}\le \big((m+1)/n\big)^{|T|}.
    \end{align*}
    This means precisely that the conditional distribution of $S$ given $\mc{E}$ and $\overline{\mc{E}_4}$ is $(m+1)/n$-spread, completing the proof.
\end{proof}

In \cref{regularisation} below, we carry out the final regularisation step. The case analysis in its proof is similar to that in \cite[Proposition 11]{jain-pham-24}.

\begin{lemma} \label{regularisation}
    Let $n, k$ be integers such that $K \le k \le n$. Consider a $((1+\eps)k+1)/n$-spread distribution supported on $(1+\eps)k$-nice sets, and let $S \subseteq [n]^2$ be a random sample from this distribution. Then, with positive probability, there exists a subset $S' \subseteq S$ that contains exactly $k$ points on each horizontal and each vertical line.
\end{lemma}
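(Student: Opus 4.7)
The plan is to apply the Ore--Ryser criterion (\cref{max-flow-min-cut}) to the bipartite graph $G$ on $[n] \sqcup [n]$ whose edge set is $S$: this reduces the goal to showing that $\sum_{|A| > |B|} \prob{e_G(A, [n] \setminus B) < k(|A|-|B|)} < 1$.

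I split the pairs $(A, B)$ (with $|A| > |B|$) into three regimes. Write $m := (1+\eps)k$, $\delta := m^{-1/12}$, $\eta := (1+\eps)(1-\delta) - 1$, and $\eta' := (1+\eps)(1+\delta) - 1$; for $k \ge K$ and $\eps = 0.005$, one has $\delta \le 10^{-3}$ and thus $\eta'/\eta \le 3/2$. In the \emph{degree regime}, either $|A|/|B| \ge \eta'/\eta$ or $|[n] \setminus B|/|[n] \setminus A| \ge \eta'/\eta$; the row/column degree bounds from condition~(1) of $m$-niceness (applied to horizontal and vertical lines, of weight~$1$) then give $e_G(A, [n]\setminus B) \ge (1-\delta)m|A| - (1+\delta)m|B| \ge k(|A|-|B|)$ (or its dual, obtained from $e_G(A, [n]\setminus B) = e_G([n], [n]\setminus B) - e_G([n]\setminus A, [n]\setminus B)$), \emph{deterministically} on every nice set. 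In the \emph{quasirandom regime} $|A|, |[n]\setminus B| \ge n/10$, condition~(4) yields $|S \cap (A \times ([n] \setminus B))| \ge (1-\delta)|A||[n] \setminus B|\,m/n$, which exceeds $k(|A|-|B|)$ by a short computation exploiting $(1-\delta)(1+\eps) > 1$ --- again deterministic.

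Everything else is the \emph{narrow regime}, which splits by complementation into (c1) both $|A|, |B| < n/10$ with $|A| \in (|B|, (\eta'/\eta)|B|)$, and (c2) both $|[n]\setminus A|, |[n]\setminus B| < n/10$ with $|[n]\setminus B| \in (|[n]\setminus A|, (\eta'/\eta)|[n]\setminus A|)$. For (c1), the failure of Ore--Ryser combined with the row-degree lower bound forces $|S \cap (A \times B)| \ge T := k|A|\eta + k|B|$; by the $((m+1)/n)$-spread property,
\[
\prob{|S \cap (A \times B)| \ge T} \;\le\; \binom{|A||B|}{T}\,p^{T} \;\le\; \left(\frac{e|A||B|p}{T}\right)^T,
\]
and using $T \ge k|B|$, $p \le 1.01 k/n$, and $|A| \le n/10$, the base is at most $1.01\, e|A|/n \le 0.3$. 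Subcase (c2) is handled symmetrically by the same argument applied to $([n]\setminus A, [n]\setminus B)$ with $T' := k|[n]\setminus B|\eta + k|[n]\setminus A|$.

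The hardest step will be closing the union bound over the narrow regime. Two observations help. First, $\binom{|A||B|}{T} > 0$ requires $|A| \ge k$ (since $T \ge k|B|$), so the narrow regime is in fact empty whenever $n \le 10k$. Second, for $n > 10k$ and $(a, b) = (|A|, |B|)$ ranging over the valid region $a \in [k, n/10]$, $b \in (2a/3, a)$, the sharper estimate $T\log(1.01 e a/n) = T(1 - \log(n/a))$ combined with $T \ge 2k^2/3$ makes each per-pair probability doubly exponentially small in $k$, easily overwhelming the entropy $\binom{n}{a}\binom{n}{b} \le (en/k)^{a+b}$. Summing over the at-most-$(n/10)^2$ valid sizes then gives $o(1)$ for $k \ge K = 10^{36}$, closing the argument in the spirit of \cite[Proposition~11]{jain-pham-24}.
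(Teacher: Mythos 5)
Your proposal takes essentially the same approach as the paper: reduce to the Ore--Ryser criterion (\cref{max-flow-min-cut}), handle ``unbalanced'' pairs via the degree bounds from condition~(1) of niceness, handle ``dense'' pairs via the quasirandomness condition~(4), and handle the remaining ``narrow'' pairs (where $|A|/|B|$ is close to $1$ and $|A|,|B| \le n/10$, or the complementary situation) via the spread property and a union bound over planted edge-sets. The paper does exactly this, splitting into seven types (type~$0$ trivial; types $1,1^*,2,2^*$ deterministic; types $3,3^*$ probabilistic via spread) with the ratio threshold $|A| > 2|B|$ rather than your $\eta'/\eta \approx 1.5$, and using the slightly weaker but cleaner lower bound $|S\cap(A\times B)| > k|B|$ rather than your $T = k\eta|A| + k|B|$ --- these are cosmetic differences. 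One small slip worth flagging: for $K = 10^{36}$ and $\eps = 0.005$, one computes $\delta \approx 9.996\times 10^{-4}$ and hence $\eta'/\eta = (\eps + \delta(1+\eps))/(\eps - \delta(1+\eps)) \approx 1.503 > 3/2$, so your stated bound $\eta'/\eta \le 3/2$ is slightly off (harmless, since you only use it to delimit the narrow regime, and a threshold of $1.503$ or even $2$ works just as well). Also ``doubly exponentially small in $k$'' overstates things --- the per-pair bound is of order $\exp(-\Omega(k^2))$ --- but the union bound does close with room to spare, as your sketch indicates and as the paper's explicit computation confirms.
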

\begin{proof}
    Our random subset $S \subseteq [n]^2$ naturally corresponds to a random bipartite graph $G$ with two parts of size $n$. Call these two parts $A_0$ and $B_0$. We need to prove that $G$ has a $k$-regular spanning subgraph. By \cref{max-flow-min-cut}, it suffices to check that, with positive probability, for every choice of subsets $A \subseteq A_0, B \subseteq B_0$, we have 
    \begin{equation} \label{eq:hall}
        e_G(A, B_0 \setminus B) \ge k(|A| - |B|).
    \end{equation}

We cover all possible choices of $A\subseteq A_0$ and $B\subseteq B_0$ by seven ``types'' (a pair of sets $(A, B)$ may have more than one type):
\begin{itemize}
    \item $(A, B)$ is of \emph{type 0} if $|B| \ge |A|$;
    \item $(A, B)$ is of \emph{type 1} if $|A| > 2|B|$;
    \item $(A, B)$ is of \emph{type 2} if $|A|\ge n/10$ and $|B|\le n/2$;
    \item $(A, B)$ is of \emph{type 3} if $|B|< |A| \le 2|B|$ and $|B|\le n/10$;
    \item $(A, B)$ is of \emph{type 1*} if $|B_0\setminus B| > 2|A_0\setminus A|$;
    \item $(A, B)$ is of \emph{type 2*} if $|B_0\setminus B|\ge n/10$ and $|A_0\setminus A|\le n/2$;
    \item $(A, B)$ is of \emph{type 3*} if $|A_0\setminus A|< |B_0\setminus B| \le 2|A_0\setminus A|$ and $|A_0\setminus A|\le n/10$.
\end{itemize}
First, we check that every pair of sets has at least one type. Indeed, if a pair of sets $(A, B)$ is not of type 0 then $|B| < |A|$, and hence $|B| \le n/2$ or $|A_0\setminus A| \le n/2$. Consider the case when $|B| \le n/2$: if $|A| \ge n/10$ then this pair is of type 2; otherwise, $|B| < |A| < n/10$, and thus this pair is of type 1 or of type 3. Similarly, in the case when $|A_0\setminus A| \le n/2$, this pair is of type 1*, 2*, or 3*.

Note that pairs of sets $(A, B)$ of type 0 trivially satisfy \cref{eq:hall}. We will show that
\begin{itemize}
    \item with probability $1$, all pairs of types 1, 1*, 2, and 2* satisfy \cref{eq:hall}; 
    \item with probability greater than $1/2$, all pairs of type 3 satisfy \cref{eq:hall};
    \item with probability greater than $1/2$, all pairs of type 3* satisfy \cref{eq:hall}. 
\end{itemize}
This will immediately imply that, with positive probability, \cref{eq:hall} holds for all choices of $A \subseteq A_0$ and $B \subseteq B_0$.

Note that \cref{eq:hall} can be equivalently written as $e_G(B_0\setminus B,A_0\setminus(A_0\setminus A))\ge k(|B_0\setminus B|-|A_0\setminus A|)$. Therefore, by symmetry, it suffices to consider types 1, 2, and 3.

Since $k \ge K = 10^{36}$ and $\eps = 0.005$, we have $\eps/4 > ((1+\eps)k)^{-1/12}$. Thus, by condition (1) from \cref{def:nice}, every vertex in $G$ has degree in $(1 \pm \eps/4)(1+\eps)k$.

\textbf{Type 1 pairs:} 
    For a pair $(A, B)$ of type 1 (with $|A| > 2|B|$), we have
    \begin{align*}
    e_G(A, B_0 \setminus B) &\ge \sum_{v \in A} \deg_G(v) - \sum_{v \in B} \deg_G(v) \ge |A| \cdot (1-\eps/4)(1+\eps)k - |B| \cdot (1+\eps/4)(1+\eps)k \\
    &\ge k(|A| - |B|) + k((3\eps/4 - \eps^2/4)|A| - (5\eps/4 + \eps^2/4)|B|) \ge k(|A| - |B|).
    \end{align*}     
\textbf{Type 2 pairs:} For a pair $(A, B)$ of type 2 (with $|A|\ge n/10$ and $|B_0 \setminus B| = n - |B| \ge n/2$), condition (4) from \cref{def:nice} implies that
    \[
    e_G(A, B_0 \setminus B) \ge (1 - \eps/4) \cdot (1+\eps) k/n \cdot |A| (n - |B|) \ge k |A| \frac{n-|B|}{n} \ge k(|A| - |B|).
    \]
\textbf{Type 3 pairs:}
    Consider a pair $(A, B)$ of type 3 (with $|B| < |A| \le 2|B|$ and $|B| \le n/10$). Note that if $e_G(A, B_0 \setminus B) < k(|A| - |B|)$, then
    \[
    e_G(A, B) > \sum_{v \in A} \deg_G(v) - k(|A| - |B|) \ge ((1-\eps/4)(1+\eps) - 1)k|A| + k|B| \ge k|B|.
    \]
    For a set of possible edges $E \subseteq A \times B$ of size $k|B|$, let $\mc{E}(A, B, E)$ be the event that $E \subseteq E_G(A, B)$.
    Since our distribution is $((1+\eps)k+1)/n$-spread, we have
    \[
    \sum_{E \subseteq A \times B,\; |E| = k|B|} \prob{\mc{E}(A, B, E)}  \le \binom{|A||B|}{k|B|}\cdot\left(\frac{(1+\eps)k+1}{n}\right)^{k|B|} \le \left(\frac{e |A|}{k} \cdot \frac{(1+\eps)k+1}{n}\right)^{k|B|} \le \left(\frac{6 |B|}{n}\right)^{k|B|}.
    \]
    For every $b \le n/10$, we sum these probabilities over all pairs of sets $(A, B)$ with $|B| = b$ and $b < |A| \le 2b$, to obtain that
    \begin{align*}
    \sum_{\substack{|B| = b \\ b < |A| \le 2b}} \sum_{\substack{E \subseteq A \times B, \\ |E| = kb}} \prob{\mc{E}(A, B, E)} &\le \binom{n}{b} \cdot b \binom{n}{2b} \cdot \left(\frac{6 b}{n}\right)^{kb} \le \left(\frac{en}{b}\right)^{3b} \cdot \left(\frac{6 b}{n}\right)^{kb} \\ 
    &\le \Big((6e)^{3} \cdot (6/10)^{k-3}\Big)^b \le (1/4)^b.
    \end{align*}
    Therefore, the probability that there is a pair of type 3 violating \cref{eq:hall} is at most
    \[
    \sum_{b=1}^{\infty} (1/4)^b < 1/2. \qedhere
    \]    
\end{proof}

\begin{proof}[\textbf{Proof of \cref{relaxed-version}}]
    Recall that $K = 10^{36}$, $\eps = 0.005$, and $K \le k \le 0.9n$. Let $(1 + \eps) k = m_{(1)} < m_{(2)} < \ldots < m_{(r)}$ be the sequence of real numbers from \cref{eq:m_i-plane}.

    Then, by \cref{induction-step}(b), there exists an $(m_{(r)}+1)/n$-spread distribution supported on $m_{(r)}$-nice sets. Iteratively applying \cref{induction-step}(a), we obtain an $(m_{(1)}+1)/n$-spread distribution supported on $m_{(1)}$-nice sets. Next, \cref{regularisation} implies that there exists an $m_{(1)}$-nice set $S$ containing a subset $S' \subseteq S$ with exactly $k$ points on each horizontal and each vertical line.

    We claim that $S'$ has the desired properties. For this, we need to check that for every line $L \in \mc{L}$, we have $|S' \cap L| \le k \cdot (w(L) + 0.01)$. Let $\delta := m_{(1)}^{-1/12} \le 0.001$. Since $S$ is $m_{(1)}$-nice, condition (1) implies that for every line $L$ with $w(L) \in (m_{(1)}^{-2/3}, 1]$,
    \[
    |S \cap L| \le (1 + \delta) m_{(1)} \cdot w(L) = (1 + \delta) (1 + \eps)k \cdot w(L) \le k \cdot (w(L) + 0.01).
    \]
    On the other hand, conditions (2) and (3) imply that for every line $L$ with $w(L) \le m_{(1)}^{-2/3}$,
    \[
    |S \cap L| \le \max\left((1 + \delta) m_{(1)}^{1/3}, 14\right) \le 0.01 \cdot k.
    \]
    This completes the proof, because $S'$ is a subset of $S$.
\end{proof}

\section{Proof of \cref{no-k+1-in-line}}
\label{sec:from-relaxed-to-main}

In this section, we deduce \cref{no-k+1-in-line} from \cref{relaxed-version} using the results of Kov\'acs, Nagy, and Szab\'o \cite{KNS-25}. The key observation behind this deduction is that the only lines with weight close to $1$ are horizontal lines, vertical lines, and the lines of slope $\pm 1$ near the main diagonals of the $n \times n$ grid.

\begin{proof}[\textbf{Proof of \cref{no-k+1-in-line}}]
    An explicit construction \cite[Proposition 3.1]{KNS-25} takes care of the case $k \ge \frac{2}{3}n$. So, we may assume that $k < \frac{2}{3} n$, and hence $\frac{3}{10} k < 0.9 \cdot  \frac{1}{4} n$.
    
    First, we additionally assume that $4 \mid n$ and $10 \mid k$. As in \cite[Definition 3.2]{KNS-25}, we partition the $n \times n$ grid into sixteen blocks of size $n/4 \times n/4$. Formally, for each $1 \le i, j \le 4$, we define 
    \[
    \mc{G}_{i, j} := \Big\{(x, y) \in [n]^2 : (i-1)\frac{n}{4} < x \le i\frac{n}{4}, \; (j-1)\frac{n}{4} < y \le j\frac{n}{4}\Big\}.
    \]
    We also set
    \[
    k_{i, j} := \begin{cases} \frac{2}{10} k, & \text{ if $i = j$ or $i + j = 5$ (i.e., if $\mc{G}_{i, j}$ intersects one of the main diagonals);} \\ \frac{3}{10} k, & \text{ otherwise,} \end{cases}
    \]
    and let $w_{i, j}(L) := |\mc{G}_{i, j} \cap L|/(n/4)$ for each line $L \in \mc{L}$.
    
    For each $i, j$, we apply \cref{relaxed-version} (with parameters $n/4$ and $k_{i,j}$) to obtain a subset $S_{i, j}\subseteq \mc{G}_{i, j}$ with exactly $k_{i,j}$ points on each horizontal and each vertical line, and at most $k_{i,j}\cdot (w_{i, j}(L)+0.01)$ points on every other line $L$. Let $S := \bigsqcup\limits_{1 \le i, j \le 4} S_{i, j}$ be the union of all these sets.

    Since each $S_{i, j}$ contains exactly $k_{i, j}$ points on each horizontal line and each vertical line, $S$ contains exactly $k$ points on each horizontal and each vertical line.
    Crucially, by \cite[Lemma 3.5]{KNS-25}, for every line $L$ which is not horizontal or vertical, we have
    \[
    \sum_{1 \le i, j \le 4} k_{i, j} w_{i, j}(L) \le \frac{4}{5}k,
    \]
    so
    \[
    |S \cap L| = \sum_{1 \le i, j \le 4} |S_{i, j} \cap L| \le \sum_{1 \le i, j \le 4} k_{i, j} (w_{i, j}(L) + 0.01) \le \frac{4}{5} k + 0.01 \sum_{1 \le i, j \le 4} k_{i, j} = 0.84 \cdot k.
    \]
    Since $k \ge 10^{37}$, this implies, in particular, that $|S \cap L| \le k - 15$.

    This completes the proof of \cref{no-k+1-in-line} in the case $4 \mid n$ and $10 \mid k$. The general case now follows from \cite[Lemma 3.12 and Lemma 3.13]{KNS-25}, making use of the ``reserve'' of $15$ points per line.
\end{proof}

\section{Concluding remarks} \label{sec:concluding-remarks}

\textbf{Smaller values of $k$?} We did not try to optimise the constant $K_0 = 10^{37}$ in \cref{no-k+1-in-line}, and our argument provides a lot of room for optimisation. However, making any progress on the classical no-three-in-line problem (i.e., the case $k = 2$) would likely require new ideas. Note that, even for ``light'' lines, our argument can only guarantee that they contain at most $C = 14$ points of our set (see condition (3) in \cref{def:nice}).

\textbf{Partitioning the grid into no-$(k+1)$-in-line sets.} In \cite{jain-pham-24}, Jain and Pham constructed an optimally spread probability distribution over decompositions of the edge-set of $K_{n, n}$ into perfect matchings (i.e., over Latin squares of order $n$).
It is plausible that, modifying our approach in a suitable way, one also might be able to prove that for a sufficiently large $k$ and every $n$ divisible by $k$, the grid $[n]^2$ can be partitioned into $n/k$ sets of size $kn$, each containing no $k+1$ points on a line.

\textbf{Exact bound in higher dimensions?} A natural further direction is to obtain an exact bound in \cref{higher-dimensions}, analogous to \cref{no-k+1-in-line}. In order to achieve this using our approach, one would need to come up with a suitable version of the regularisation step (\cref{regularisation}). Note that a set $S' \subseteq [n]^d$ that contains exactly $k$ points in each axis-aligned affine subspace of dimension $t$ is a (multipartite) $(n, d, d-t, k)$-design, and that one would need to find such set $S'$ inside a ``random-like'' configuration $S \subseteq [n]^d$ that contains $(1+o(1)) k$ points in each axis-aligned affine subspace of dimension $t$. This appears to be related to the challenging problem of finding designs inside Erd\H{o}s--R\'enyi random hypergraphs of appropriate density (see \cite{DKP-24,jain-pham-24,KKKMO-23,keevash-22,SSS-23} for partial results in this direction).

\bibliographystyle{plain}

\bibliography{references}

\end{document}